\newtheorem{theo}{Theorem}
\newtheorem{cor}{Corollary}
\newtheorem{lem}{Lemma}
\newtheorem{prop}{Proposition}
\theoremstyle{definition}
\theoremstyle{remark}
\newtheorem{rem}{\bf Remark\/}
\numberwithin{equation}{section}
\def\1{{\mathchoice {\rm 1\mskip-4mu l} {\rm 1\mskip-4mu l}{\rm 1\mskip-4.5mu l} {\rm 1\mskip-5mu l}}}
\newcommand{\ds}{\displaystyle}
\title{Meromorphic Bergman spaces}
\author[N. Ghiloufi]{Noureddine Ghiloufi $^\star$}
\author[M. Zaway]{Mohamed Zaway $^{\star\star}$}
\subjclass[2010]{30H20, 30A10}
\keywords{Bergman spaces, Bergman Kernels, Hardy-Littlewood and Riesz-Fejer inequalities.\\
$^\star$Corresponding author,\\
E-mail addresses: nghiloufi@kfu.edu.sa, noureddine.ghiloufi@fsg.rnu.tn (N. Ghiloufi)
 and m\_zaway@su.edu.sa, mohamed\_zaway@yahoo.fr (M. Zaway)}
\begin{document}
\maketitle
$^\star$ {\footnotesize University of Gabes, Faculty of Sciences of Gabes, LR17ES11 Mathematics and Applications laboratory, 6072, Gabes, Tunisia.}\\
$^{\star\star}$ {\footnotesize Mathematics Department,
Faculty of Sciences and Humanities in Dawadmi, 
Shaqra University, 11911, Saudi Arabia.\\
Irescomath Laboratory, Gabes University, 6072 Zrig Gabes, Tunisia.}
\begin{abstract}
    In this paper we introduce new spaces of holomorphic functions on the pointed unit disc of $\mathbb C$  that generalize classical Bergman spaces. We prove some fundamental properties of these spaces and their dual spaces. We finish the paper by extending  Hardy-Littlewood and Fej\'er-Riesz inequalities to these spaces with an application on Toeplitz operators.
\end{abstract}

\section{Introduction and preliminary results}
Since the seventeenth of the last century the notion of Bergman spaces has known an increasing use in mathematics and essentially in complex analysis and geometry. The fundamental concept of this notion is the Bergman kernel. This kernel was computed firstly for the unit disc $\mathbb D$ in $\mathbb C$ and then it was  determined for any simply connected domain by the famous Riemann's theorem. However the determination  of the Bergman kernels of domains in $\mathbb C^n$ is more delicate and it is determined for some type of domains and still unknown up to our day in general. In this paper we generalize most properties of Bergman spaces of the unit disk by introducing new spaces of holomorphic functions on the pointed unit disc $\mathbb D^*$ that are square integrable with respect to a probability measure $d\mu_{\alpha,\beta}$ for some $\alpha,\beta>-1$. In fact the classical Bergman space is reduced to the case $\beta=0$ (See \cite{He-KO-Zh} for more details).  We call these new spaces meromorphic Bergman spaces; indeed any element of such a space is a meromorphic function which has $0$ as a pole of order controlled by the parameter $\beta$. The originality of our idea is that the Bergman kernels of these spaces may have zeros in the unit disk essentially when $\beta$ is not an integer. This problem will be discussed in a separate paper as a continuity of the present paper. For this reason we will concentrate here on the topological properties of these spaces and prove some well known inequalities.\\

Throughout this paper,  $\mathbb D(a,r)$ will be the disc of $\mathbb C$ with center  $a$ and radius $r>0$. In case $a=0$, we use  $\mathbb D(r)$ (resp. $\mathbb D$)  in stead of  $\mathbb D(0,r)$ (resp. $\mathbb D(0,1)$). We set $\mathbb S(r):=\partial \mathbb D(r)$ the circle and $\mathbb D^*:=\mathbb D\smallsetminus\{0\}$. For every $-1<\alpha,\beta<+\infty$, we consider the positive measure $\mu_{\alpha,\beta}$ on $\mathbb D$ defined by  $$d\mu_{\alpha,\beta}(z):=\frac{1}{\mathscr B(\alpha+1,\beta+1)}|z|^{2\beta}(1-|z|^2)^\alpha dA(z)$$ where $\mathscr B$ is the beta function defined by $$\mathscr B(s,t)=\int_0^1x^{s-1}(1-x)^{t-1}dx=\frac{\Gamma(s)\Gamma(t)}{\Gamma(s+t)},\quad \forall\; s,t>0$$ and $$dA(z)=\frac{1}{\pi}dx dy=\frac{1}{\pi}rdrd\theta,\quad z=x+iy=re^{i\theta}$$ the normalized area measure on $\mathbb D$.\\
The general aim of this paper is to study the properties of the Bergman type space $\mathcal A_{\alpha,\beta}^p(\mathbb D^*)$ defined for $0<p<+\infty$ as the set of holomorphic functions on $\mathbb D^*$ that belongs to the space
$$L^p(\mathbb D,d\mu_{\alpha,\beta})=\{f:\mathbb D\longrightarrow \mathbb C;\hbox{ measurable function such that } \|f\|_{\alpha,\beta,p}<+\infty\}$$
where $$\|f\|_{\alpha,\beta,p}^p:=\int_{\mathbb D}|f(z)|^pd\mu_{\alpha,\beta}(z).$$
When $1\leq p<+\infty$, the space $\left(L^p(\mathbb D,d\mu_{\alpha,\beta}),\|.\|_{\alpha,\beta,p}\right)$ is a Banach space; however for $0<p<1$, the space $L^p(\mathbb D,d\mu_{\alpha,\beta})$ is a complete metric space where the metric is given by $d(f,g)=\|f-g\|_{\alpha,\beta,p}^p$. The following proposition will be useful in the hole of the paper.
\begin{prop}\label{p1}
    For every $0<r<1$ and $0<\varepsilon<1$  there exists  $c_\varepsilon(r)=c_{\varepsilon,\alpha,\beta}(r)>0$ such that for any $0<p<+\infty$ and $f\in \mathcal A_{\alpha,\beta}^p(\mathbb D^*)$ we have
    $$|f(z)|^p\leq \frac{ \mathscr B(\alpha+1,\beta+1) }{c_\varepsilon(r)}\|f\|_{\alpha,\beta,p}^p,\qquad \forall z\in \mathbb S(r).$$
    One can choose $c_\varepsilon(r)=r_\varepsilon^2\mathfrak{a}_\varepsilon(r)\mathfrak{b}_\varepsilon(r)$ with $r_\varepsilon=\varepsilon \min(r,1-r)$,
    $$\mathfrak{a}_\varepsilon(r):=\left\{
    \begin{array}{lcl}
      \displaystyle\left[1-(r+r_\varepsilon)^2\right]^\alpha& if& \alpha\geq 0\\
      \\
       \displaystyle\left[1-(r-r_\varepsilon)^2\right]^\alpha& if& -1<\alpha<0
    \end{array}\right.$$
    and
    $$\mathfrak{b}_\varepsilon(r):=\left\{
    \begin{array}{lcl}
      \displaystyle(r-r_\varepsilon)^{2\beta}& if& \beta\geq 0\\
      \\
       \displaystyle(r+r_\varepsilon)^{2\beta}& if& -1<\beta<0
    \end{array}\right.$$
\end{prop}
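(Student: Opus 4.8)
The plan is to exploit the subharmonicity of $|f|^p$ together with the sub-mean value inequality on a small disc centred at the point $z\in\mathbb S(r)$, and then to compare the resulting unweighted average against the weighted norm by bounding the weight from below on that disc. The explicit constant $c_\varepsilon(r)=r_\varepsilon^2\mathfrak a_\varepsilon(r)\mathfrak b_\varepsilon(r)$ should then fall out of the computation.

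First I would fix $z$ with $|z|=r$ and set $\rho:=r_\varepsilon=\varepsilon\min(r,1-r)$. Since $0<\varepsilon<1$ one has $\rho<\min(r,1-r)$, so every $w\in\mathbb D(z,\rho)$ satisfies $r-\rho<|w|<r+\rho$ with $0<r-\rho$ and $r+\rho<1$; hence $\overline{\mathbb D(z,\rho)}\subset\mathbb D^*$, and $f$, whose only possible pole sits at the origin, is holomorphic on a neighbourhood of this closed disc. Because $f$ is holomorphic, $|f|^p$ is subharmonic for \emph{every} $p>0$, and this holds uniformly in $p$ and across the zeros of $f$. The sub-mean value property on $\mathbb D(z,\rho)$ then gives, with $dA=\frac1\pi\,dx\,dy$,
$$|f(z)|^p\le \frac{1}{\rho^2}\int_{\mathbb D(z,\rho)}|f(w)|^p\,dA(w).$$

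The next step is to reinsert the weight artificially: writing the integrand as $|f(w)|^p\,|w|^{2\beta}(1-|w|^2)^\alpha\cdot\big(|w|^{2\beta}(1-|w|^2)^\alpha\big)^{-1}$, I would bound $|w|^{2\beta}(1-|w|^2)^\alpha$ from below uniformly on $\mathbb D(z,\rho)$. This is the heart of the argument and the reason for the four-case definitions. Using $r-\rho<|w|<r+\rho$ and the monotonicity of $t\mapsto t^{2\beta}$ (increasing if $\beta\ge0$, decreasing if $-1<\beta<0$) one obtains $|w|^{2\beta}\ge\mathfrak b_\varepsilon(r)$, and the monotonicity of $t\mapsto(1-t^2)^\alpha$ (decreasing if $\alpha\ge0$, increasing if $-1<\alpha<0$) yields $(1-|w|^2)^\alpha\ge\mathfrak a_\varepsilon(r)$; multiplying gives $|w|^{2\beta}(1-|w|^2)^\alpha\ge\mathfrak a_\varepsilon(r)\mathfrak b_\varepsilon(r)$ throughout the small disc.

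Combining these estimates, $\int_{\mathbb D(z,\rho)}|f|^p\,dA\le\big(\mathfrak a_\varepsilon(r)\mathfrak b_\varepsilon(r)\big)^{-1}\int_{\mathbb D(z,\rho)}|f|^p\,|w|^{2\beta}(1-|w|^2)^\alpha\,dA$; since the integrand is nonnegative I may enlarge the domain to all of $\mathbb D$ and recognise $\int_{\mathbb D}|f|^p\,|w|^{2\beta}(1-|w|^2)^\alpha\,dA=\mathscr B(\alpha+1,\beta+1)\|f\|_{\alpha,\beta,p}^p$, which produces exactly the asserted bound with $c_\varepsilon(r)=\rho^2\mathfrak a_\varepsilon(r)\mathfrak b_\varepsilon(r)$. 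The only genuine obstacle is the bookkeeping across the four sign-cases for $\alpha$ and $\beta$ when locating the minimum of the weight; the subharmonicity and mean-value steps are standard, and the independence of the constant from $p$ and from the zeros of $f$ requires only the remark that $|f|^p$ is subharmonic for all $p>0$.
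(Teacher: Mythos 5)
Your proposal is correct and follows essentially the same route as the paper's own proof: the sub-mean value inequality for the subharmonic function $|f|^p$ on the disc $\mathbb D(z,r_\varepsilon)\subset\mathbb D^*$, followed by a pointwise lower bound on the weight $|w|^{2\beta}(1-|w|^2)^\alpha$ via the same four monotonicity cases, and enlargement of the integration domain to all of $\mathbb D$. The only cosmetic difference is that you insert the weight after the mean-value step while the paper folds it into the same display; the constant and the case analysis are identical.
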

\begin{proof}
    Let $0<r<1$,  $0<\varepsilon<1$ and $0<p<+\infty$ be fixed reals. Let $f\in \mathcal A_{\alpha,\beta}^p(\mathbb D^*)$ and $z\in \mathbb S(r)$.  We set $r_\varepsilon=\varepsilon \min(r,1-r)$. It is easy to see that $\overline{\mathbb D}(z,r_\varepsilon)\subset \mathbb D^*$; so thanks to the subharmonicity of $|f|^p$, we obtain
    \begin{equation}\label{eq 1.1}
    \begin{array}{lcl}
        \displaystyle|f(z)|^p&\leq& \displaystyle\frac{1}{r_\varepsilon^2}\int_{\mathbb D(z,r_\varepsilon)}|f(w)|^pdA(w)\\
        &\leq & \ds\frac{\mathscr B(\alpha+1,\beta+1)}{r_\varepsilon^2}\int_{\mathbb D(z,r_\varepsilon)} \frac{|f(w)|^p}{|w|^{2\beta}(1-|w|^2)^\alpha} d\mu_{\alpha,\beta}(w)\\
      \end{array}
    \end{equation}
    If $w\in\mathbb D(z,r_\varepsilon)$ then $r-r_\varepsilon\leq |w|\leq r+r_\varepsilon$. Thus we obtain
    $$|w|^{2\beta}\geq \mathfrak{b}_\varepsilon(r):=\left\{
    \begin{array}{lcl}
      \displaystyle(r-r_\varepsilon)^{2\beta}& if& \beta\geq 0\\
      \\
       \displaystyle(r+r_\varepsilon)^{2\beta}& if& -1<\beta<0
    \end{array}\right.$$
    and
    $$(1-|w|^2)^\alpha\geq\mathfrak{a}_\varepsilon(r):=\left\{
    \begin{array}{lcl}
      \displaystyle\left[1-(r+r_\varepsilon)^2\right]^\alpha& if& \alpha\geq 0\\
      \\
       \displaystyle\left[1-(r-r_\varepsilon)^2\right]^\alpha& if& -1<\alpha<0
    \end{array}\right.$$
    It follows that Inequality (\ref{eq 1.1}) gives
    $$\begin{array}{lcl}
        \displaystyle|f(z)|^p&\leq& \ds\frac{\mathscr B(\alpha+1,\beta+1)}{r_\varepsilon^2\mathfrak{a}_\varepsilon(r)\mathfrak{b}_\varepsilon(r)}\int_{\mathbb D(z,r_\varepsilon)} |f(w)|^p d\mu_{\alpha,\beta}(w)\\
        &\leq& \ds\frac{\mathscr B(\alpha+1,\beta+1)}{r_\varepsilon^2\mathfrak{a}_\varepsilon(r)\mathfrak{b}_\varepsilon(r)}\|f\|_{\alpha,\beta,p}^p.
      \end{array}$$
\end{proof}
Using the previous proof, one can improve the previous proposition as follows
\begin{rem}
    For any $n\in\mathbb N$ and $0<r<1$, there exists $c=c(n,r,\alpha,\beta)>0$ such that for every $f\in\mathcal A_{\alpha,\beta}^p(\mathbb D^*)$ we have
    $$|f^{(n)}(z)|^p\leq c\|f\|_{\alpha,\beta,p}^p,\qquad \forall z\in \mathbb S(r).$$
\end{rem}
As a first consequence of Proposition \ref{p1}, we have
\begin{cor}
    For every $-1<\alpha,\beta<+\infty$ and $0<p<+\infty$, the space $\mathcal A_{\alpha,\beta}^p(\mathbb D^*)$ is closed in $L^p(\mathbb D,\mu_{\alpha,\beta})$ and for any $z\in\mathbb D^*$, the linear form $\delta_z:\mathcal A_{\alpha,\beta}^p(\mathbb D^*)\longrightarrow \mathbb C$ defined by $\delta_z(f)=f(z)$ is bounded on $\mathcal A_{\alpha,\beta}^p(\mathbb D^*)$.
\end{cor}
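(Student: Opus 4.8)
The plan is to extract both assertions directly from the pointwise estimate of Proposition \ref{p1}, which already carries the full analytic weight.

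First I would dispose of the boundedness of the evaluation functional. Fix $z\in\mathbb D^*$ and set $r=|z|\in(0,1)$, so that $z\in\mathbb S(r)$. Choosing any fixed value of $\varepsilon$, say $\varepsilon=\tfrac12$, Proposition \ref{p1} gives
$$|\delta_z(f)|^p=|f(z)|^p\leq \frac{\mathscr B(\alpha+1,\beta+1)}{c_{1/2}(r)}\,\|f\|_{\alpha,\beta,p}^p,\qquad f\in\mathcal A_{\alpha,\beta}^p(\mathbb D^*).$$
For $1\leq p<+\infty$ this is precisely the statement that $\delta_z$ is a bounded linear functional on the normed space $\bigl(\mathcal A_{\alpha,\beta}^p(\mathbb D^*),\|\cdot\|_{\alpha,\beta,p}\bigr)$, with operator norm at most $\bigl(\mathscr B(\alpha+1,\beta+1)/c_{1/2}(r)\bigr)^{1/p}$. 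For $0<p<1$ the same inequality yields $|\delta_z(f)-\delta_z(g)|^p\leq C\,d(f,g)$, so by linearity continuity of $\delta_z$ reduces to continuity at the origin, which the estimate provides.

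For closedness I would first upgrade the pointwise bound to a uniform one on compacta of $\mathbb D^*$. Any compact $K\subset\mathbb D^*$ lies in a closed annulus $\{r_1\leq|z|\leq r_2\}$ with $0<r_1<r_2<1$. Inspecting the explicit factorization $c_\varepsilon(r)=r_\varepsilon^2\mathfrak{a}_\varepsilon(r)\mathfrak{b}_\varepsilon(r)$, each factor is continuous and strictly positive on $(0,1)$, so $c_{1/2}$ attains a positive minimum $m_K>0$ on $[r_1,r_2]$. Applying Proposition \ref{p1} on each circle $\mathbb S(r)$ and taking the supremum over $z\in K$, I obtain a single constant $C_K>0$ with
$$\sup_{z\in K}|f(z)|^p\leq C_K\,\|f\|_{\alpha,\beta,p}^p,\qquad f\in\mathcal A_{\alpha,\beta}^p(\mathbb D^*).$$
Now let $(f_n)$ be a sequence in $\mathcal A_{\alpha,\beta}^p(\mathbb D^*)$ converging in $L^p(\mathbb D,\mu_{\alpha,\beta})$ to some $f$. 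Being convergent it is Cauchy, so applying the uniform estimate to the differences $f_n-f_m$ shows that $(f_n)$ is uniformly Cauchy on every compact subset of $\mathbb D^*$; hence it converges locally uniformly on $\mathbb D^*$ to a function $g$ that is holomorphic on $\mathbb D^*$ by Weierstrass' theorem. On the other hand $L^p$-convergence furnishes a subsequence $f_{n_k}\to f$ almost everywhere on $\mathbb D$, and since $f_{n_k}\to g$ pointwise on $\mathbb D^*$, a set of full measure, we get $f=g$ a.e. Thus the class $f$ has the holomorphic representative $g$, so $f\in\mathcal A_{\alpha,\beta}^p(\mathbb D^*)$ and the space is closed.

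The only genuinely technical point is the passage from the pointwise bound to the uniform bound over a compact annulus, which rests solely on the continuity and positivity of the elementary factors $r_\varepsilon^2$, $\mathfrak{a}_\varepsilon(r)$ and $\mathfrak{b}_\varepsilon(r)$ on $(0,1)$; this is routine. The remaining ingredients — Weierstrass' theorem and the almost-everywhere identification of the two limits via a subsequence — are standard.
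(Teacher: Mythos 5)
Your proof is correct and follows essentially the same route as the paper: both assertions are derived from the pointwise estimate of Proposition \ref{p1}, with closedness obtained from locally uniform convergence of the $L^p$-convergent sequence and Weierstrass' theorem. The only difference is that you make explicit two steps the paper leaves implicit, namely the uniformity of the constant $c_\varepsilon(r)$ over a compact annulus and the a.e.\ identification of the $L^p$-limit with the locally uniform limit via an a.e.-convergent subsequence; this is added care, not a different argument.
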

\begin{proof}
    As $L^p(\mathbb D,d\mu_{\alpha,\beta})$ is complete, it suffices to consider a sequence $(f_n)_n\subset \mathcal A_{\alpha,\beta}^p(\mathbb D^*)$ that converges to $f\in L^p(\mathbb D,d\mu_{\alpha,\beta})$ and to prove that $f\in  \mathcal A_{\alpha,\beta}^p(\mathbb D^*)$. Thanks to Proposition \ref{p1}, the sequence $(f_n)_n$ converges uniformly to $f$ on every compact subset of $\mathbb D^*$. Hence the function $f$ is holomorphic on $\mathbb D^*$ and we conclude that  $f\in  \mathcal A_{\alpha,\beta}^p(\mathbb D^*)$.\\
    For the second statement, one can see that $\delta_z$ is a linear functional well defined on $\mathcal A_{\alpha,\beta}^p(\mathbb D^*)$. For the continuity of $\delta_z$, thanks to Proposition \ref{p1}, for every $z\in\mathbb D^*$, there exists $c>0$ such that for every $f\in\mathcal A_{\alpha,\beta}^p(\mathbb D^*)$ we have $|\delta_z(f)|=|f(z)|\leq c\|f\|_{\alpha,\beta,p}$. Thus the linear functional $\delta_z$ is continuous on $\mathcal A_{\alpha,\beta}^p(\mathbb D^*)$.
\end{proof}
In the following we give some immediate properties:\\
\begin{itemize}
  \item if $f\in\mathcal A_{\alpha,\beta}^p(\mathbb D^*)$ then $0$ can't be an essential singularity for $f$, hence either $0$ is removable for $f$ (so $f$ is holomorphic on $\mathbb D$) or $0$ is a pole for $f$ with order $\nu_f=\nu_f(0)$ that satisfies
\begin{equation}\label{eq 1.2}
\nu_f\leq m_{p,\beta}=\left\{\begin{array}{lcl}
                        \ds\left\lfloor\frac{2(\beta+1)}{p}\right\rfloor & if & \ds\frac{2(\beta+1)}{p}\not\in\mathbb N\\
                        \ds\frac{2(\beta+1)}{p}-1& if & \ds\frac{2(\beta+1)}{p}\in\mathbb N
                      \end{array}\right.
\end{equation}

    where $\lfloor .\rfloor$ is the integer part.
    \item If we set $\widetilde{f}(z)=z^{\nu_f}f(z)$ then $\widetilde{f}$ is a holomorphic function  on $\mathbb D$ and $f\in\mathcal A_{\alpha,\beta}^p(\mathbb D^*)$ if and only if $\widetilde{f}\in\mathcal A_{\alpha,\beta-\frac{p\nu_f}{2}}^p(\mathbb D^*)$ and
        $$\|f\|_{\alpha,\beta,p}=\left(\frac{\mathscr B(\alpha+1,\beta-\frac{p\nu_f}{2}+1)}{\mathscr B(\alpha+1,\beta+1)}\right)^{\frac1p}\|\widetilde{f}\|_{\alpha,\beta-\frac{p\nu_f}{2},p}.$$
        Using the two previous properties, if we replace $f$ by $z^{m_{p,\beta}}f$ in the proof of Proposition \ref{p1}, we can obtain a more sharp estimate in Proposition \ref{p1}.
  \item If $-1<\beta<\beta'$ and $-1<\alpha<\alpha'$ then $\mathcal A_{\alpha,\beta}^p(\mathbb D^*)\subseteq\mathcal A_{\alpha',\beta'}^p(\mathbb D^*)$ and the canonical injection is continuous. This is a consequence of the fact that we have
      $$\mathscr B(\alpha'+1,\beta'+1)\|f\|_{\alpha',\beta',p}\leq \mathscr B(\alpha+1,\beta+1)\|f\|_{\alpha,\beta,p}$$ for every $f\in\mathcal A_{\alpha,\beta}^p(\mathbb D^*)$.
\end{itemize}
Claim that if we set $\mathbb D_\zeta:=\mathbb D\smallsetminus\{\zeta\}$ for any $\zeta\in\mathbb D$, then  all results on $\mathcal A_{\alpha,\beta}^p(\mathbb D^*)$ can be extended to the space $\mathcal A_{\alpha,\beta}^p(\mathbb D_\zeta)$ of holomorphic functions on $\mathbb D_\zeta$ that are $p-$integrable with respect to the positive measure $|z-\zeta|^{2\beta}(1-|z|^2)^\alpha dA(z)$. Indeed $h\in\mathcal A_{\alpha,\beta}^p(\mathbb D_\zeta)$ if and only if $h\circ\varphi_\zeta\in\mathcal A_{\alpha,\beta}^p(\mathbb D^*)$ where $\varphi_\zeta(z)=\frac{\zeta-z}{1-\overline{\zeta}z}$.
\section{Meromorphic Bergman Kernels}

In the case $p=2$ we have $\mathcal A_{\alpha,\beta}^2(\mathbb D^*)$ is a Hilbert space and $\mathcal A_{\alpha,\beta}^2(\mathbb D^*)=\mathcal A_{\alpha,m}^2(\mathbb D^*)$ for every $\beta\in]m-1,m]$ with $m\in\mathbb N$. If we set
\begin{equation}\label{eq 2.1}
e_n(z)=\sqrt{\frac{\mathscr B(\alpha+1,\beta+1)}{\mathscr B(\alpha+1,n+\beta+1)}}\ z^n
\end{equation}
for every $n\geq -m$, then the sequence $(e_n)_{n\geq -m}$ is a Hilbert basis of $\mathcal A_{\alpha,\beta}^2(\mathbb D^*)$. Furthermore, if  $f,g\in\mathcal A_{\alpha,\beta}^2(\mathbb D^*)$ with $$f(z)=\sum_{n= -m}^{+\infty}a_nz^n,\quad g(z)=\sum_{n= -m}^{+\infty}b_nz^n$$  then $$\langle f,g\rangle_{\alpha,\beta}=\sum_{n= -m}^{+\infty}a_n\overline{b}_n\frac{\mathscr B(\alpha+1,n+\beta+1)}{\mathscr B(\alpha+1,\beta+1)}$$ where $\langle .,.\rangle_{\alpha,\beta}$ is the inner product in $\mathcal A_{\alpha,\beta}^2(\mathbb D^*)$ inherited from $L^2(\mathbb D,d\mu_{\alpha,\beta})$.
\begin{lem}\label{l1}
Let $-1<\alpha<+\infty$ and  $m\in\mathbb N$. Then the reproducing (Bergman) kernel $\mathbb{K}_{\alpha,m}$  of $\mathcal A_{\alpha,m}^2(\mathbb D^*)$ is given by
$$\ds \mathbb{K}_{\alpha,m}(w,z)=\frac{(\alpha+1)\mathscr B(\alpha+1,m+1)}{(w\overline{z})^m(1-w\overline{z})^{2+\alpha}}.$$
\end{lem}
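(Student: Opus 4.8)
The plan is to compute $\mathbb{K}_{\alpha,m}$ directly from the orthonormal basis exhibited just above the statement, using the standard fact that the reproducing kernel of a separable Hilbert space of holomorphic functions with bounded point evaluations is obtained by summing $e_n(w)\overline{e_n(z)}$ over any Hilbert basis $(e_n)$. Concretely, for fixed $z\in\mathbb D^*$ the reproducing element $\mathbb K_{\alpha,m}(\cdot,z)$ is the unique $h\in\mathcal A^2_{\alpha,m}(\mathbb D^*)$ with $\langle f,h\rangle_{\alpha,m}=f(z)$ for all $f$; expanding $f=\sum_{n\ge -m}\langle f,e_n\rangle_{\alpha,m}\,e_n$ forces $h=\sum_{n\ge -m}\overline{e_n(z)}\,e_n$, so that
$$\mathbb K_{\alpha,m}(w,z)=\sum_{n\ge -m}e_n(w)\,\overline{e_n(z)}.$$
The boundedness of the evaluations $\delta_z$ (the Corollary) guarantees that this series converges and represents the kernel. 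I would then record that, since each $e_n(z)=c_n z^n$ with $c_n>0$ real, the summand is simply $c_n^2\,w^n\overline{z}^{\,n}$.

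Next I would insert the explicit constants from $(e_n)$, namely $c_n^2=\mathscr B(\alpha+1,m+1)/\mathscr B(\alpha+1,n+m+1)$, and reindex by $k=n+m\ge 0$ to pull out the singular factor:
$$\mathbb K_{\alpha,m}(w,z)=\frac{\mathscr B(\alpha+1,m+1)}{(w\overline z)^m}\sum_{k=0}^{+\infty}\frac{(w\overline z)^k}{\mathscr B(\alpha+1,k+1)}.$$
Setting $t=w\overline z$, note that $|t|<1$ for $w,z\in\mathbb D^*$, so the scalar series converges; everything then reduces to evaluating $\sum_{k\ge 0}t^k/\mathscr B(\alpha+1,k+1)$ in closed form.

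The heart of the argument — and the only real obstacle — is recognizing this last series as a generalized binomial series. Writing $\mathscr B(\alpha+1,k+1)=\Gamma(\alpha+1)\Gamma(k+1)/\Gamma(\alpha+k+2)$ and using $\Gamma(\alpha+2)=(\alpha+1)\Gamma(\alpha+1)$ gives
$$\frac1{\mathscr B(\alpha+1,k+1)}=(\alpha+1)\,\frac{\Gamma(\alpha+2+k)}{\Gamma(\alpha+2)\,k!},$$
whence $\sum_{k\ge0}t^k/\mathscr B(\alpha+1,k+1)=(\alpha+1)(1-t)^{-(\alpha+2)}$ by the expansion $(1-t)^{-(\alpha+2)}=\sum_{k\ge0}\frac{\Gamma(\alpha+2+k)}{\Gamma(\alpha+2)\,k!}\,t^k$. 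Substituting back and replacing $t$ by $w\overline z$ yields exactly
$$\mathbb K_{\alpha,m}(w,z)=\frac{(\alpha+1)\mathscr B(\alpha+1,m+1)}{(w\overline z)^m(1-w\overline z)^{2+\alpha}},$$
as claimed. The points to watch are the Gamma-to-Beta bookkeeping and the justification, via the Corollary, that the termwise basis sum genuinely computes the reproducing kernel rather than merely a formal power series.
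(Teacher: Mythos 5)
Your proposal is correct and follows essentially the same route as the paper: sum $e_n(w)\overline{e_n(z)}$ over the Hilbert basis $(e_n)_{n\ge -m}$, reindex to pull out $(w\overline z)^{-m}$, and evaluate the resulting series via the generalized binomial expansion. The only difference is that you spell out the Gamma-to-Beta bookkeeping and the justification that the basis sum represents the kernel, both of which the paper leaves implicit.
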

\begin{proof}
    The sequence $(e_n)_{n\geq -m}$ given by (\ref{eq 2.1}) is a Hilbert basis of $\mathcal A_{\alpha,\beta}^2(\mathbb D^*)$, hence  the reproducing kernel of $\mathcal A_{\alpha,\beta}^2(\mathbb D^*)$ is given by
   $$
    \begin{array}{lcl}
         \mathbb{K}_{\alpha,\beta}(w,z)&=&\ds\sum_{n=-m}^{+\infty}e_n(w)\overline{e_n(z)}=\sum_{n=-m}^{+\infty}\frac{\mathscr B(\alpha+1,\beta+1)}{\mathscr B(\alpha+1,n+\beta+1)}w^n\overline{z}^n\\
          &=&\ds\frac{1}{(w\overline{z})^m}\sum_{n=0}^{+\infty}\frac{\mathscr B(\alpha+1,\beta+1)}{\mathscr B(\alpha+1,n+\beta-m+1)}(w\overline{z})^n
      \end{array}$$
      The computation of this kernel in the general case is more complicated. However, in our case for $\beta=m$, we obtain
    $$
    \begin{array}{lcl}
         \mathbb{K}_{\alpha,m}(w,z)&=&\ds\frac{1}{(w\overline{z})^m}\sum_{n=0}^{+\infty}\frac{\mathscr B(\alpha+1,m+1)}{\mathscr B(\alpha+1,n+1)}(w\overline{z})^n\\
          &=&\ds \frac{(\alpha+1)\mathscr B(\alpha+1,m+1)}{(w\overline{z})^m(1-w\overline{z})^{2+\alpha}}.
      \end{array}$$
\end{proof}
Here we give some fundamental properties of the Bergman kernel as  consequences of Lemma \ref{l1}.
\begin{cor}
    Let $-1<\alpha<+\infty$ and $m\in\mathbb N$. Let $\mathbb{P}_{\alpha,m}$ be the orthogonal projection from $L^2(\mathbb D,d\mu_{\alpha,m})$ onto $\mathcal A_{\alpha,m}^2(\mathbb D^*)$. Then for every $f\in L^2(\mathbb D,d\mu_{\alpha,m})$ we have
    $$\mathbb{P}_{\alpha,m}f(z)=(\alpha+1)\mathscr B(\alpha+1,m+1)\int_{\mathbb D}\frac{f(w)} {(z\overline{w})^m(1-z\overline{w})^{2+\alpha}}d\mu_{\alpha,m}(w).$$
\end{cor}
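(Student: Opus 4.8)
The plan is to exploit the reproducing property of the kernel computed in Lemma~\ref{l1} together with the defining orthogonality of the projection, thereby reducing the statement to a pointwise pairing against the representer of point evaluation. First I would record that, for each fixed $z\in\mathbb D^*$, the function $K_z:=\mathbb K_{\alpha,m}(\cdot,z)$ belongs to $\mathcal A_{\alpha,m}^2(\mathbb D^*)$: its closed form $\frac{(\alpha+1)\mathscr B(\alpha+1,m+1)}{(w\overline z)^m(1-w\overline z)^{2+\alpha}}$ is holomorphic in $w$ on $\mathbb D^*$ and square-integrable against $d\mu_{\alpha,m}$, and it is precisely the $L^2$-limit of the partial sums $\sum_{n=-m}^{N}\overline{e_n(z)}\,e_n$ coming from the Hilbert basis $(e_n)_{n\geq -m}$.

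Next I would establish the reproducing identity $\langle g,K_z\rangle_{\alpha,m}=g(z)$ for every $g\in\mathcal A_{\alpha,m}^2(\mathbb D^*)$. Writing $g=\sum_{n\geq -m}\langle g,e_n\rangle_{\alpha,m}\,e_n$ and using the orthonormality of $(e_n)_{n\geq -m}$, the inner product $\langle g,K_z\rangle_{\alpha,m}$ collapses term by term to $\sum_{n\geq -m}\langle g,e_n\rangle_{\alpha,m}\,e_n(z)=g(z)$, the conjugation in the second slot of $K_z$ cancelling the conjugation in the coefficients.

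Then, for arbitrary $f\in L^2(\mathbb D,d\mu_{\alpha,m})$, I would decompose $f=\mathbb P_{\alpha,m}f+(f-\mathbb P_{\alpha,m}f)$, where the second summand is orthogonal to $\mathcal A_{\alpha,m}^2(\mathbb D^*)$ and in particular to $K_z$. Pairing with $K_z$ annihilates that term, so that $\langle f,K_z\rangle_{\alpha,m}=\langle \mathbb P_{\alpha,m}f,K_z\rangle_{\alpha,m}=\mathbb P_{\alpha,m}f(z)$ by the reproducing identity applied to $\mathbb P_{\alpha,m}f\in\mathcal A_{\alpha,m}^2(\mathbb D^*)$. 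It then remains to expand $\langle f,K_z\rangle_{\alpha,m}=\int_{\mathbb D}f(w)\overline{\mathbb K_{\alpha,m}(w,z)}\,d\mu_{\alpha,m}(w)$, to invoke the Hermitian symmetry $\overline{\mathbb K_{\alpha,m}(w,z)}=\mathbb K_{\alpha,m}(z,w)$ (the scalar $(\alpha+1)\mathscr B(\alpha+1,m+1)$ is real, $\overline{(w\overline z)^m}=(z\overline w)^m$, and conjugation commutes with the principal power since $\mathrm{Re}(1-z\overline w)>0$), and to substitute the closed form from Lemma~\ref{l1} to reach exactly the stated integral.

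The main obstacle I anticipate is the analytic bookkeeping rather than the algebra: verifying that $K_z\in L^2(\mathbb D,d\mu_{\alpha,m})$, equivalently that the defining series converges in $L^2$, and justifying the termwise interchange of summation and integration in the reproducing identity. Both reduce to the finiteness of $\sum_{n\geq -m}|e_n(z)|^2=\mathbb K_{\alpha,m}(z,z)<+\infty$ for $z\in\mathbb D^*$, which follows from the convergence of the geometric-type series summed in Lemma~\ref{l1}. Once the representer $K_z$ is known to lie in the space, the orthogonality argument is purely formal and sidesteps any delicate exchange of limits.
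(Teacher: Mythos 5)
Your proof is correct and takes essentially the same route as the paper: the paper's proof is a one-line appeal to Lemma \ref{l1} together with the standard identity $\mathbb{P}_{\alpha,m}f(z)=\langle f,\mathbb{K}_{\alpha,m}(\cdot,z)\rangle_{\alpha,m}$, which is precisely what you derive via the orthogonal decomposition $f=\mathbb{P}_{\alpha,m}f+(f-\mathbb{P}_{\alpha,m}f)$ and the reproducing property. The only difference is that you supply the standard verifications (membership of $\mathbb{K}_{\alpha,m}(\cdot,z)$ in $\mathcal A_{\alpha,m}^2(\mathbb D^*)$, the reproducing identity, Hermitian symmetry of the kernel) that the paper simply cites as known.
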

\begin{proof}
    This is a simple consequence of  Lemma \ref{l1} and the fact that for every  $f\in L^2(\mathbb D,d\mu_{\alpha,m})$ we have
    $\mathbb{P}_{\alpha,m}f(z)=\langle f,\mathbb{K}_{\alpha,m}(.,z)\rangle_{\alpha,m}.$
\end{proof}
Using the density of $\mathcal A_{\alpha,m}^2(\mathbb D^*)$ in $\mathcal A_{\alpha,m}^1(\mathbb D^*)$, one can prove the following corollary:
\begin{cor} Let $-1<\alpha<+\infty$ and $m\in\mathbb N$. Then for every $f\in \mathcal A_{\alpha,m}^1(\mathbb D^*)$ we have
    $$f(z)=(\alpha+1)\mathscr B(\alpha+1,m+1)\int_{\mathbb D}\frac{f(w)}{(z\overline{w})^m(1-z\overline{w})^{2+\alpha}}d\mu_{\alpha,m}(w).$$
\end{cor}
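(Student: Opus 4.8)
The plan is to transfer the reproducing identity from $\mathcal A^2_{\alpha,m}(\mathbb D^*)$, where it is already available, to $\mathcal A^1_{\alpha,m}(\mathbb D^*)$ by a density-and-limit argument, exactly as the preceding remark suggests. First I would note that on the smaller space the formula is nothing but the previous corollary: if $f\in\mathcal A^2_{\alpha,m}(\mathbb D^*)$ then $f$ already lies in the range of the orthogonal projection, so $\mathbb P_{\alpha,m}f=f$, and the integral representation of $\mathbb P_{\alpha,m}f$ becomes precisely the asserted identity. Thus the formula holds on the dense subspace $\mathcal A^2_{\alpha,m}(\mathbb D^*)$, and everything reduces to a passage to the limit.

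Fix $f\in\mathcal A^1_{\alpha,m}(\mathbb D^*)$ and pick $(f_n)_n\subset\mathcal A^2_{\alpha,m}(\mathbb D^*)$ with $\|f_n-f\|_{\alpha,m,1}\to 0$. Fixing $z\in\mathbb D^*$ and writing the identity for each $f_n$, the left-hand side converges correctly without effort: by Proposition \ref{p1} applied with $p=1$ to $f_n-f$ (equivalently, by the boundedness of the evaluation functional $\delta_z$ on $\mathcal A^1_{\alpha,m}(\mathbb D^*)$) one has $f_n(z)\to f(z)$. The entire proof therefore hinges on showing
$$\int_{\mathbb D}\frac{f_n(w)}{(z\overline w)^m(1-z\overline w)^{2+\alpha}}\,d\mu_{\alpha,m}(w)\longrightarrow\int_{\mathbb D}\frac{f(w)}{(z\overline w)^m(1-z\overline w)^{2+\alpha}}\,d\mu_{\alpha,m}(w).$$

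This is the step I expect to be the main obstacle. Writing $K_z(w)=[(z\overline w)^m(1-z\overline w)^{2+\alpha}]^{-1}$, the naive idea of bounding $|\int_{\mathbb D} K_z(f_n-f)\,d\mu_{\alpha,m}|$ by $\|f_n-f\|_{\alpha,m,1}$ times a supremum of $|K_z|$ fails, because $K_z$ grows like $|w|^{-m}$ as $w\to 0$ and so $K_z\notin L^\infty(d\mu_{\alpha,m})$; the crude duality $L^1$–$L^\infty$ is unavailable. The feature to exploit is that the singularity of $K_z$ at the origin is exactly absorbed by the weight $|w|^{2m}$ of the measure: for fixed $z$ with $|z|=\rho<1$ one has $|1-z\overline w|\ge 1-\rho>0$, and
$$|K_z(w)|\,d\mu_{\alpha,m}(w)=\frac{|w|^{m}(1-|w|^2)^\alpha}{\rho^m\,\mathscr B(\alpha+1,m+1)\,|1-z\overline w|^{2+\alpha}}\,dA(w),$$
whose density is integrable on all of $\mathbb D$.

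It remains to upgrade this integrability of $K_z$ into continuity of the linear functional $g\mapsto\int_{\mathbb D}K_z\,g\,d\mu_{\alpha,m}$ on $\mathcal A^1_{\alpha,m}(\mathbb D^*)$, which is the genuine crux. The cleanest route I would attempt uses the rotation invariance of $d\mu_{\alpha,m}$: replacing $K_z$ by its angular average against $g$ collapses the double series coming from the expansion $(1-z\overline w)^{-(2+\alpha)}=\sum_{k\ge0}c_k(z\overline w)^k$ to a single series indexed by the Laurent frequencies $n\ge -m$ of $g$, and each Laurent coefficient of $g$ is controlled by $\|g\|_{\alpha,m,1}$ through the Cauchy-type estimates underlying Proposition \ref{p1}. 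Establishing the absolute convergence of this series and its continuous dependence on $g$ — and with it the density invoked at the outset — is where all the technical weight sits; once it is in place, applying the functional to $g=f_n-f$ yields the convergence of the right-hand side and completes the proof.
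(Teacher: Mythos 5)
Your strategy coincides with the paper's: the paper obtains this corollary from the $\mathcal A^2$ reproducing formula by invoking the density of $\mathcal A_{\alpha,m}^2(\mathbb D^*)$ in $\mathcal A_{\alpha,m}^1(\mathbb D^*)$, with no more detail than you give. The trouble is that the density you invoke when you ``pick $(f_n)_n\subset\mathcal A^2_{\alpha,m}(\mathbb D^*)$ with $\|f_n-f\|_{\alpha,m,1}\to0$'' --- and concede at the end still needs proof --- is not available on all of $\mathcal A^1_{\alpha,m}(\mathbb D^*)$, so the gap you left open is an actual obstruction, not a deferred technicality. By (\ref{eq 1.2}) with $p=1$, $\beta=m$, an element of $\mathcal A^1_{\alpha,m}(\mathbb D^*)$ may have a pole at $0$ of order up to $2m+1$; for instance $f(w)=w^{-(m+1)}$ lies in $\mathcal A^1_{\alpha,m}(\mathbb D^*)$, since $|w|^{-(m+1)}|w|^{2m}=|w|^{m-1}$ is $dA$-integrable. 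On the other hand, every element of $\mathcal A^2_{\alpha,m}(\mathbb D^*)$ has pole order at most $m_{2,m}=m$, and by Proposition \ref{p1} convergence in $\|\cdot\|_{\alpha,m,1}$ forces uniform convergence on each circle $\mathbb S(r)$, hence convergence of every Laurent coefficient; thus any $\mathcal A^1$-limit of $\mathcal A^2$-functions again has pole order at most $m$, and $w^{-(m+1)}$ admits no approximating sequence. Worse, the asserted identity itself fails for this $f$: for fixed $z$ the integrand is absolutely integrable (its modulus is comparable to $|w|^{-1}$ near the origin), and expanding $(1-z\overline w)^{-(2+\alpha)}=\sum_{k\geq0}c_k(z\overline w)^k$ and integrating in the angular variable first, each term carries the factor $\int_0^{2\pi}e^{-i(k+1)\theta}d\theta=0$, so the right-hand side equals $0$ while $f(z)\neq0$; for pole orders $n$ with $m+1<n\leq 2m+1$ the integral is not even absolutely convergent. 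Hence both the formula and your argument can only live on the subspace of $\mathcal A^1_{\alpha,m}(\mathbb D^*)$ of functions with pole order at most $m$, which is precisely the closure of $\mathcal A^2_{\alpha,m}(\mathbb D^*)$ there.

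On that subspace your plan is sound but, as submitted, it is an outline rather than a proof: the two steps carrying all the content (the correctly restated density, and the continuity of $g\mapsto\int_{\mathbb D}K_z\,g\,d\mu_{\alpha,m}$ on the holomorphic subspace) are both left unestablished. Note, however, that your final idea --- Laurent expansion plus rotation invariance --- can be promoted to a complete, self-contained argument that needs no density and no limit of functionals: if $g$ has pole order at most $m$, write $g(w)=\sum_{n\geq-m}a_nw^n$, so that $w^mg(w)$ is holomorphic on $\mathbb D$; expand the kernel, integrate term by term over annuli $\varepsilon\leq|w|\leq\rho$ (where both series converge uniformly) and let $\varepsilon\to0$, $\rho\to1$. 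Orthogonality of $\{e^{in\theta}\}_n$ collapses the double sum, and the identity $c_k\,\mathscr B(k+1,\alpha+1)=1/(\alpha+1)$ yields exactly $\int_{\mathbb D}K_z\,g\,d\mu_{\alpha,m}=g(z)/\bigl((\alpha+1)\mathscr B(\alpha+1,m+1)\bigr)$, i.e.\ the stated formula. This direct route is cleaner than the density argument and makes the necessity of the pole-order restriction transparent.
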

The following result is well known in general, its proof is based essentially on the fact that $\mathbb{K}_{\alpha,\beta}(z,z)\neq 0$ for every
$z\in\mathbb D^*$.
\begin{prop}
Let $-1<\alpha,\beta<+\infty$ and $\mathbb{K}_{\alpha,\beta}$ be the reproducing (Bergman) kernel of $\mathcal A_{\alpha,\beta}^2(\mathbb D^*)$.  Then for every $z\in\mathbb D^*$ we have $\mathbb{K}_{\alpha,\beta}(z,z)>0$ and satisfies
\begin{equation}\label{eq 2.2}
\begin{array}{lcl}
    \ds \mathbb{K}_{\alpha,\beta}(z,z)&=&\ds \sup\left\{|f(z)|^2;\ f\in\mathcal A_{\alpha,\beta}^2(\mathbb D^*),\ \|f\|_{\alpha,\beta,2}\leq1\right\}\\
    &=&\ds\sup\left\{\frac{1}{\|f\|_{\alpha,\beta,2}};\ f\in\mathcal A_{\alpha,\beta}^2(\mathbb D^*),\ f(z)=1\right\}
  \end{array}
\end{equation}
In particular, the norm of the Dirac form $\delta_z$ on $\mathcal A_{\alpha,\beta}^2(\mathbb D^*)$ is given by $$\|\delta_z\|=\|\mathbb{K}_{\alpha,\beta}(.,z)\|_{\alpha,\beta,2}=\sqrt{\mathbb{K}_{\alpha,\beta}(z,z)}.$$
\end{prop}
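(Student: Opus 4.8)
The plan is to derive all three assertions from the reproducing property $f(z)=\langle f,\mathbb{K}_{\alpha,\beta}(\cdot,z)\rangle_{\alpha,\beta}$, valid for every $f\in\mathcal A_{\alpha,\beta}^2(\mathbb D^*)$, combined with the Cauchy--Schwarz inequality. First I would record the identity $\mathbb{K}_{\alpha,\beta}(z,z)=\|\mathbb{K}_{\alpha,\beta}(\cdot,z)\|_{\alpha,\beta,2}^2$, obtained by applying the reproducing property to the function $w\mapsto\mathbb{K}_{\alpha,\beta}(w,z)$ and evaluating at $z$. This already gives $\mathbb{K}_{\alpha,\beta}(z,z)\ge 0$. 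To upgrade nonnegativity to strict positivity I would use the Hilbert basis $(e_n)_{n\ge -m}$ of (\ref{eq 2.1}): expanding the kernel in this basis yields $\mathbb{K}_{\alpha,\beta}(z,z)=\sum_{n\ge -m}|e_n(z)|^2$, and since $e_0\equiv 1$ the term $n=0$ contributes $1$, whence $\mathbb{K}_{\alpha,\beta}(z,z)\ge 1>0$ for every $z\in\mathbb D^*$. Equivalently, the constant function $1$ belongs to $\mathcal A_{\alpha,\beta}^2(\mathbb D^*)$ and does not vanish, so $\mathbb{K}_{\alpha,\beta}(\cdot,z)$ cannot be the null element and its norm is strictly positive.

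For the first supremum in (\ref{eq 2.2}) I would apply Cauchy--Schwarz directly to the reproducing identity: for any $f$ with $\|f\|_{\alpha,\beta,2}\le 1$ one has $|f(z)|^2=|\langle f,\mathbb{K}_{\alpha,\beta}(\cdot,z)\rangle_{\alpha,\beta}|^2\le \|\mathbb{K}_{\alpha,\beta}(\cdot,z)\|_{\alpha,\beta,2}^2=\mathbb{K}_{\alpha,\beta}(z,z)$. Because $\mathbb{K}_{\alpha,\beta}(\cdot,z)\ne 0$, the normalized function $f_\star:=\mathbb{K}_{\alpha,\beta}(\cdot,z)/\|\mathbb{K}_{\alpha,\beta}(\cdot,z)\|_{\alpha,\beta,2}$ has unit norm and satisfies $|f_\star(z)|^2=\mathbb{K}_{\alpha,\beta}(z,z)$, so the bound is attained and the supremum equals $\mathbb{K}_{\alpha,\beta}(z,z)$.

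The second supremum is the dual extremal problem of minimizing $\|f\|_{\alpha,\beta,2}$ subject to $f(z)=1$. From the constraint the same inequality gives $1=|\langle f,\mathbb{K}_{\alpha,\beta}(\cdot,z)\rangle_{\alpha,\beta}|\le \|f\|_{\alpha,\beta,2}\,\sqrt{\mathbb{K}_{\alpha,\beta}(z,z)}$, hence $\|f\|_{\alpha,\beta,2}\ge 1/\sqrt{\mathbb{K}_{\alpha,\beta}(z,z)}$, with equality for $f=\mathbb{K}_{\alpha,\beta}(\cdot,z)/\mathbb{K}_{\alpha,\beta}(z,z)$, which indeed satisfies $f(z)=1$; the minimal norm being $1/\sqrt{\mathbb{K}_{\alpha,\beta}(z,z)}$, the corresponding supremum equals $\mathbb{K}_{\alpha,\beta}(z,z)$. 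Finally, the ``in particular'' statement is immediate: by definition $\|\delta_z\|=\sup_{\|f\|_{\alpha,\beta,2}\le 1}|f(z)|$, which by the first supremum equals $\sqrt{\mathbb{K}_{\alpha,\beta}(z,z)}=\|\mathbb{K}_{\alpha,\beta}(\cdot,z)\|_{\alpha,\beta,2}$. I do not expect a genuine obstacle here; the one point requiring care is the nonvanishing of $\mathbb{K}_{\alpha,\beta}(\cdot,z)$, which both legitimizes dividing by its norm to form the extremizers and is exactly the fact highlighted in the sentence preceding the statement. Everything else is the standard extremal description of a reproducing kernel.
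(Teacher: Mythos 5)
Your proof is correct and takes essentially the same route as the paper's: strict positivity via the Hilbert-basis expansion $\mathbb{K}_{\alpha,\beta}(z,z)=\sum_{n\geq -m}|e_n(z)|^2$, and both extremal identities via Cauchy--Schwarz applied to the reproducing property, with the normalized kernels $\mathbb{K}_{\alpha,\beta}(\cdot,z)/\sqrt{\mathbb{K}_{\alpha,\beta}(z,z)}$ and $\mathbb{K}_{\alpha,\beta}(\cdot,z)/\mathbb{K}_{\alpha,\beta}(z,z)$ as extremizers. Note that, like the paper, you inherit the statement's harmless conflation of $\|f\|_{\alpha,\beta,2}$ with $\|f\|_{\alpha,\beta,2}^2$ in the second supremum: since the minimal norm under the constraint $f(z)=1$ is $1/\sqrt{\mathbb{K}_{\alpha,\beta}(z,z)}$, the supremum of $1/\|f\|_{\alpha,\beta,2}$ is literally $\sqrt{\mathbb{K}_{\alpha,\beta}(z,z)}$, so the displayed equality holds with $1/\|f\|_{\alpha,\beta,2}^2$ in place of $1/\|f\|_{\alpha,\beta,2}$.
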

One can find the proof of the first equality in Krantz book \cite{Kr}, however the second one is due to Kim \cite{Ki}. For the completeness of our paper we give the proof.
\begin{proof}

Thanks to the proof of Lemma \ref{l1}, we have  $\mathbb{K}_{\alpha,\beta}(z,z)>0$ for every $z\in\mathbb D^*$. To prove the first equality in (\ref{eq 2.2}), we fix $z\in\mathbb D^*$ and we consider
$$\mathscr Q(z):=\sup\left\{|f(z)|^2;\ f\in\mathcal A_{\alpha,\beta}^2(\mathbb D^*),\ \|f\|_{\alpha,\beta,2}\leq1\right\}.$$
Let $f\in\mathcal A_{\alpha,\beta}^2(\mathbb D^*)$ such that $\|f\|_{\alpha,\beta,2}\leq1$. Then thanks to the Cauchy-Schwarz inequality,
$$|f(z)|^2=|\langle f,\mathbb{K}_{\alpha,\beta}(.,z)\rangle_{\alpha,\beta}|^2\leq \|f\|_{\alpha,\beta,2}^2\|\mathbb{K}_{\alpha,\beta}(.,z)\|_{\alpha,\beta,2}^2\leq \mathbb{K}_{\alpha,\beta}(z,z).$$
It follows that $\mathscr Q(z)\leq \mathbb{K}_{\alpha,\beta}(z,z)$. Conversely, we set
$$g(\xi)=\frac{\mathbb{K}_{\alpha,\beta}(\xi,z)}{\sqrt{\mathbb{K}_{\alpha,\beta}(z,z)}},\quad \xi\in\mathbb D^*.$$
Hence we have $g\in\mathcal A_{\alpha,\beta}^2(\mathbb D^*)$, $\|g\|_{\alpha,\beta,2}=1$ and $|g(z)|^2=\mathbb{K}_{\alpha,\beta}(z,z)$ and the converse inequality $\mathscr Q(z)\geq \mathbb{K}_{\alpha,\beta}(z,z)$ is proved.\\
Now to prove the second equality in (\ref{eq 2.2}), we let
$$\mathscr M(z):=\inf\left\{\|f\|_{\alpha,\beta,2};\ f\in\mathcal A_{\alpha,\beta}^2(\mathbb D^*),\ f(z)=1\right\}$$
If we set $$h(\xi)=\frac{\mathbb{K}_{\alpha,\beta}(\xi,z)}{\mathbb{K}_{\alpha,\beta}(z,z)},\quad \xi\in\mathbb D^*$$
then  $h(z)=1$ and $h\in\mathcal A_{\alpha,\beta}^2(\mathbb D^*)$. Indeed,
$$\begin{array}{lcl}
    \|h\|_{\alpha,\beta,2}^2&=&\ds\int_{\mathbb D}|h(\xi)|^2d\mu_{\alpha,\beta}(\xi)=\int_{\mathbb D} \frac{\mathbb{K}_{\alpha,\beta}(\xi,z)}{\mathbb{K}_{\alpha,\beta}(z,z)} \frac{\mathbb{K}_{\alpha,\beta}(z,\xi)}{\mathbb{K}_{\alpha,\beta}(z,z)}d\mu_{\alpha,\beta}(\xi) \\ &=&\ds\frac{1}{\mathbb{K}_{\alpha,\beta}(z,z)^2}\mathbb{K}_{\alpha,\beta}(z,z)=\frac{1}{\mathbb{K}_{\alpha,\beta}(z,z)}.
  \end{array}
$$
It follows that $$\mathscr M(z)\leq \frac{1}{\mathbb{K}_{\alpha,\beta}(z,z)}.$$
Conversely, for every $f\in\mathcal A_{\alpha,\beta}^2(\mathbb D^*)$ such that $f(z)=1$ we have
$$|f(\zeta)|^2=|\langle f,\mathbb{K}_{\alpha,\beta}(.,\zeta)\rangle_{\alpha,\beta}|^2\leq \|f\|_{\alpha,\beta,2}^2\mathbb{K}_{\alpha,\beta}(\zeta,\zeta).$$
Thus we obtain
$$\frac{|f(\zeta)|^2}{\mathbb{K}_{\alpha,\beta}(\zeta,\zeta)}\leq \|f\|_{\alpha,\beta,2}^2,\quad \forall\; \zeta\in\mathbb D^*.$$
In particular, for $\zeta=z$,
$$\frac{1}{\mathbb{K}_{\alpha,\beta}(z,z)}\leq \|f\|_{\alpha,\beta,2}^2.$$
We conclude that
$$\frac{1}{\mathbb{K}_{\alpha,\beta}(z,z)}\leq \mathscr M(z).$$
\end{proof}

\section{Duality  of meromorphic Bergman spaces}

The aim of this part is to prove that the dual of $\mathcal A_{\alpha,\beta}^p(\mathbb D^*)$ is related to $\mathcal A_{\alpha,\beta}^q(\mathbb D^*)$ with $\frac1p+\frac1q=1$. This will be a consequence of the main result (Theorem \ref{th1}). But to prove the main result we need  the following lemma:
\begin{lem}\label{l2}
    For every $-1<\sigma, \gamma<+\infty$ we set
    $$I_\omega(z)=\int_{\mathbb D}\frac{(1-|w|^2)^{\sigma}|w|^{2\gamma}}{|1-z\overline{w}|^{2+\sigma+\omega}}dA(w).$$
    Then $I_\omega$ is continuous on $\mathbb D$ and
    $$I_\omega(z)\sim \left\{
    \begin{array}{lcl}
      1&if& \omega<0\\
      \ds\log\frac{1}{1-|z|^2}&if& \omega=0\\
      \ds\frac{1}{(1-|z|^2)^\omega}&if& \omega>0
    \end{array}\right.$$
    when $|z|\longrightarrow 1^-$ where $\varphi\sim \psi$ means that there exist $0<c_1<c_2$ such that we have $c_1\varphi(z)\leq \psi(z)\leq c_2\varphi(z)$.
\end{lem}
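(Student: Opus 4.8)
The plan is to treat continuity by a simple domination argument, then to compute $I_\omega$ \emph{exactly} as a power series in $|z|^2$ by expanding the kernel binomially, and finally to read off the three regimes from the growth of the Taylor coefficients via Stirling's formula together with a classical Abelian comparison. For continuity, fix a compact $K\subset\mathbb D$ and put $\rho=\max_{z\in K}|z|<1$. For $z\in K$ and $w\in\mathbb D$ one has $|1-z\overline w|\geq 1-\rho>0$, so the integrand is dominated by a fixed multiple of $(1-|w|^2)^\sigma|w|^{2\gamma}$, which is $dA$-integrable precisely because $\sigma,\gamma>-1$. Since the integrand is continuous in $z$, dominated convergence gives continuity of $I_\omega$ on $K$, hence on $\mathbb D$.

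For the series expansion, set $\lambda=1+\tfrac{\sigma+\omega}{2}$, so that $2+\sigma+\omega=2\lambda$ and $|1-z\overline w|^{2+\sigma+\omega}=(1-z\overline w)^{\lambda}(1-\overline z w)^{\lambda}$. Expanding both factors by the binomial series and writing $w=\rho e^{i\phi}$, $z=|z|e^{i\psi}$, the angular integration $\int_0^{2\pi}e^{i(j-k)(\psi-\phi)}\,d\phi$ annihilates every cross term with $j\neq k$, and the surviving radial integral becomes a Beta integral after the substitution $u=\rho^2$. Since all terms are nonnegative for $|z|<1$, Tonelli's theorem justifies the interchange of sum and integral and yields
\begin{equation*}
I_\omega(z)=\sum_{j=0}^{+\infty}a_j\,|z|^{2j},\qquad a_j=\left(\frac{\Gamma(\lambda+j)}{\Gamma(\lambda)\,j!}\right)^{2}\mathscr B(j+\gamma+1,\sigma+1).
\end{equation*}

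Next, using $\Gamma(j+a)/\Gamma(j+b)\sim j^{a-b}$ as $j\to+\infty$, one gets $\Gamma(\lambda+j)/j!\sim j^{\lambda-1}$ and $\mathscr B(j+\gamma+1,\sigma+1)=\Gamma(\sigma+1)\,\Gamma(j+\gamma+1)/\Gamma(j+\gamma+\sigma+2)\sim \Gamma(\sigma+1)\,j^{-\sigma-1}$, whence
\begin{equation*}
a_j\sim\frac{\Gamma(\sigma+1)}{\Gamma(\lambda)^2}\,j^{\,2\lambda-\sigma-3}=\frac{\Gamma(\sigma+1)}{\Gamma(\lambda)^2}\,j^{\,\omega-1},
\end{equation*}
because $2\lambda-\sigma-3=\omega-1$. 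Note that $\gamma$ has disappeared from the exponent and survives only in the constant, reflecting that the factor $|w|^{2\gamma}$ is comparable to $1$ near the boundary, where the singularity is produced.

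It remains to pass from the size of $a_j$ to the boundary behavior of $\sum_j a_j x^j$ at $x=|z|^2\to1^-$; this is the only delicate point. I would invoke the elementary Abelian comparison: if $a_j,b_j\geq0$, $a_j/b_j\to L\in(0,+\infty)$ and $\sum_j b_j x^j\to+\infty$, then $\sum_j a_j x^j\sim L\sum_j b_j x^j$ as $x\to1^-$. Taking $b_j=j^{\omega-1}$ for $j\geq1$ and recalling the classical evaluations $\sum_{j\geq1} j^{\omega-1}x^j\sim\Gamma(\omega)(1-x)^{-\omega}$ for $\omega>0$, $\sum_{j\geq1} j^{-1}x^j\sim\log\tfrac{1}{1-x}$ for $\omega=0$, and $\sum_{j\geq1} j^{\omega-1}x^j\to\sum_{j\geq1} j^{\omega-1}<\infty$ for $\omega<0$, the three regimes follow with $x=|z|^2$ and $1-x=1-|z|^2$. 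Finally, since $I_\omega$ is continuous and strictly positive on $\mathbb D$, these boundary asymptotics upgrade to two-sided bounds $c_1\varphi(z)\leq I_\omega(z)\leq c_2\varphi(z)$ on all of $\mathbb D$, using compactness of $\{|z|\leq r_0\}$ for the complementary region. The main obstacle is precisely this Abelian step; the expansion and the Stirling estimate are routine bookkeeping.
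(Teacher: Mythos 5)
Your proof is correct and is essentially the same argument as the paper's, which simply defers to \cite[Theo.~1.7]{He-KO-Zh}: that proof is exactly your binomial expansion with angular orthogonality, Stirling asymptotics for the coefficients, and an Abelian comparison of power series, and your version merely carries the extra factor $|w|^{2\gamma}$ through the Beta integral (which, as you note, affects only the constants). One small caveat: the expansion and the formula for $a_j$ presuppose $\lambda=1+\frac{\sigma+\omega}{2}>0$, but when $2+\sigma+\omega\le 0$ the kernel is bounded above and below on $\mathbb D\times\mathbb D$, one necessarily has $\omega<0$, and $I_\omega\sim 1$ holds trivially, so nothing is lost.
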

\begin{proof}
The proof is similar to \cite[Theo.1.7]{He-KO-Zh}.
\end{proof}
\begin{theo}\label{th1}
For every $-1<\alpha, a, b<+\infty$ and $m\in\mathbb N$, we consider the two integral operators $T$ and $S$ defined by
$$\begin{array}{lcl}
     Tf(z)&=&\ds\frac{1}{z^m}\int_{\mathbb D}\frac{f(w)(1-|w|^2)^{\alpha-a} w^m}{|w|^{2b}(1-z\overline{w})^{2+\alpha}}d\mu_{a,b}(w)\\
     Sf(z)&=&\ds\frac{1}{|z|^m}\int_{\mathbb D}\frac{f(w)(1-|w|^2)^{\alpha-a} |w|^{m-2b}}{|1-z\overline{w}|^{2+\alpha}}d\mu_{a,b}(w).
  \end{array}
$$
Then for every $1\leq p<+\infty$, the following assertions are equivalent:
\begin{enumerate}
  \item $T$ is bounded on $L^p(\mathbb D,d\mu_{a,b})$
  \item $S$ is bounded on $L^p(\mathbb D,d\mu_{a,b})$
  \item $ p(\alpha+1)>a+1$  and
        $\left\{\begin{array}{lcl}
                   \ds m-2<2b\leq m& if& p=1\\
                   \ds mp-2<2b<mp-2+2p& if& p>1
                \end{array}\right.$
\end{enumerate}
\end{theo}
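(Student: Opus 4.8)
The plan is to establish the cycle of implications $(3)\Rightarrow(2)\Rightarrow(1)\Rightarrow(3)$. The first move is cosmetic but essential: since $d\mu_{a,b}(w)=\mathscr B(a+1,b+1)^{-1}|w|^{2b}(1-|w|^2)^a\,dA(w)$, unpacking the measure rewrites both operators as integrals against $dA$ carrying the single weight $(1-|w|^2)^\alpha$; moreover, because $\mathrm{Re}(1-z\overline w)>0$ on $\mathbb D\times\mathbb D$, the principal branch satisfies $|(1-z\overline w)^{2+\alpha}|=|1-z\overline w|^{2+\alpha}$. Taking absolute values inside $T$ and using $|w^m|/|w|^{2b}=|w|^{m-2b}$ then gives the pointwise domination $|Tf(z)|\le S(|f|)(z)$, whence $\|Tf\|_{a,b,p}\le\|S\|\,\|f\|_{a,b,p}$; this yields $(2)\Rightarrow(1)$ at once.

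For $(3)\Rightarrow(2)$ I would run Schur's test on the positive kernel $K_S(z,w)=(1-|w|^2)^{\alpha-a}|w|^{m-2b}|z|^{-m}|1-z\overline w|^{-(2+\alpha)}$ of $S$ with respect to $d\mu_{a,b}$, using the test function $h(z)=|z|^{c}(1-|z|^2)^{s}$. Each of the two Schur integrals collapses, after unpacking the measure, into an instance of the integral $I_\omega$ of Lemma \ref{l2}: the first with $\sigma=\alpha+sq$ and $\omega=-sq$, the second with $\sigma=sp+a$ and $\omega=\alpha-sp-a$. Choosing $s$ so that both exponents $\omega$ are positive makes $I_\omega(z)$ comparable to $(1-|z|^2)^{-\omega}$ on all of $\mathbb D$, so the two Schur inequalities reduce to matching powers of $|z|$. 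This power-matching is solvable precisely when $c$ can be placed in the intersection of the intervals forced by the convergence constraints $\sigma,\gamma>-1$ of Lemma \ref{l2}, and a direct computation shows this intersection is nonempty exactly when $mp-2<2b<mp-2+2p$; the admissible range for $s$ (needed to keep both $\omega$ positive together with $\sigma>-1$) is nonempty exactly when $p(\alpha+1)>a+1$. For $p=1$ the Schur machinery degenerates and I would instead use the $L^1$ criterion $\sup_w\int_{\mathbb D}K_S(z,w)\,d\mu_{a,b}(z)<\infty$, which Lemma \ref{l2} evaluates to $|w|^{m-2b}$ up to constants, bounded exactly when $m-2<2b\le m$ and $\alpha>a$.

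The remaining and most delicate implication is $(1)\Rightarrow(3)$. Here the obstacle is that $T$ carries the holomorphic, oscillatory kernel $(1-z\overline w)^{-(2+\alpha)}$ rather than its modulus, so feeding in nonnegative test functions invites cancellation and destroys the sharp information. The device I would use is to choose test functions whose angular symmetry, tested against the expansion $(1-z\overline w)^{-(2+\alpha)}=\sum_k\binom{1+\alpha+k}{k}(z\overline w)^k$, isolates a single term. Concretely: applying $T$ to $f(w)=\overline w^{\,m}(1-|w|^2)^t$ kills all but the $k=0$ term and yields $Tf(z)=c\,z^{-m}$, so $Tf\in L^p(d\mu_{a,b})$ forces $z^{-m}\in L^p(d\mu_{a,b})$, i.e. $2b>mp-2$; applying $T$ to the radial $f(w)=(1-|w|^2)^t$ and letting $t$ descend to the critical exponent shows the defining integral diverges unless $p(\alpha+1)>a+1$; and passing to the adjoint $T^*$ (bounded on $L^q$ iff $T$ is bounded on $L^p$) and testing it on $g(z)=\overline z^{\,m}(1-|z|^2)^s$ again isolates the lowest term, producing $T^*g(w)=c'(1-|w|^2)^{\alpha-a}|w|^{m-2b}$, whose membership in $L^q(d\mu_{a,b})$ forces $2b<mp-2+2p$. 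These three necessity statements close the cycle.

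I expect the bookkeeping in the Schur step (checking that a single pair $c,s$ simultaneously satisfies every convergence constraint of Lemma \ref{l2}, and that the resulting admissibility windows coincide exactly with the inequalities in $(3)$) and the cancellation issue in the necessity step to be the two places demanding the most care; the angular-orthogonality trick in the third paragraph is precisely what renders the necessity direction tractable despite the oscillation of the kernel of $T$.
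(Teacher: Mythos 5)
Your proposal is correct in outline, and on two of the three legs it coincides with the paper's proof: your $(2)\Rightarrow(1)$ is the same pointwise domination $|Tf|\le S(|f|)$, and your $(3)\Rightarrow(2)$ is the same Schur-test argument (the paper takes $h(z)=|z|^{-t}(1-|z|^2)^{-s}$, you take $h(z)=|z|^{c}(1-|z|^2)^{s}$ with $c,s$ negative; your admissibility windows are exactly the paper's conditions \eqref{eq 3.1}--\eqref{eq 3.2}), with $p=1$ handled by the $L^1$/$L^\infty$ criterion in both cases. The genuine difference is the necessity leg. The paper never tests $T$: it disposes of $(1)\Rightarrow(2)$ in one line via the $z$-dependent unimodular multiplier $\Omega_z f(w)=\frac{(1-z\overline w)^{2+\alpha}|w|^m}{|1-z\overline w|^{2+\alpha}w^m}f(w)$, and then proves $(2)\Rightarrow(3)$ by feeding $f_N(z)=(1-|z|^2)^N$ and constants to the positive operators $S$ and $S^\star$, where Lemma \ref{l2} applies with no cancellation to worry about. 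You instead prove $(1)\Rightarrow(3)$ directly, neutralizing the oscillation of the holomorphic kernel by angular orthogonality: $T$ applied to $\overline w^{\,m}(1-|w|^2)^t$ yields $c\,z^{-m}$, forcing $2b>mp-2$; radial test functions at the critical exponent force $p(\alpha+1)>a+1$; and $T^\star$ applied to $\overline z^{\,m}(1-|z|^2)^s$ forces $2b<mp-2+2p$. This is the classical route for $\beta=0$, and it buys something concrete: your cycle needs only three implications and avoids the $\Omega_z$ trick entirely — a real advantage, since the paper's one-line $(1)\Rightarrow(2)$ conceals a subtlety (because $\Omega_z$ depends on the evaluation point, boundedness of $T$ on each fixed $\Omega_z f$ does not by itself control the diagonal function $z\mapsto T(\Omega_z f)(z)$ in $L^p$), whereas in your scheme $(1)\Rightarrow(2)$ comes for free around the cycle; the price is that your test integrals must be computed exactly by coefficient extraction rather than merely estimated via Lemma \ref{l2}. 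Two details to nail down: at $p=1$ the third test should be run as $L^\infty$-boundedness of $T^\star$, which produces the non-strict endpoint $2b\le m$ demanded by $(3)$; and your divergence argument tacitly uses the convention that boundedness of $T$ includes a.e.\ absolute convergence of the defining integral for every $f\in L^p$ — the standard convention, which the paper's own testing of $S$ and $S^\star$ also relies on.
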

\begin{proof}
$"(2)\Longrightarrow (1)"$ is obvious.\\
$"(1)\Longrightarrow (2)"$ can be deduced using the transformation
$$\varOmega_z f(w)=\frac{(1-z\overline{w})^{2+\alpha}|w|^m}{|1-z\overline{w}|^{2+\alpha}w^m}f(w).$$
$"(2)\Longrightarrow (3)"$ Now assume that $S$ is bounded on $L^p(\mathbb D,d\mu_{a,b})$. If we apply $S$ to  $f_N(z)=(1-|z|^2)^N$ for $N$ large enough, we obtain
$$\|Sf_N\|_{a,b,p}^p=\ds \int_{\mathbb D}(1-|z|^2)^a|z|^{2b-mp}\left(\int_{\mathbb D}\frac{(1-|w|^2)^{\alpha+N} |w|^{m}}{|1-z\overline{w}|^{2+\alpha}}dA(w)\right)^pdA(z)$$
is finite. Thanks to Lemma \ref{l2}, we obtain $b>\frac{mp}{2}-1$.\\
To prove the other inequalities,  we need $S^\star$ the adjoint operator of $S$ with respect to the inner product $\langle.,.\rangle_{a,b}$. It is given by
$$\begin{array}{lcl}
     S^\star g(w)&=&\ds(1-|w|^2)^{\alpha-a} |w|^{m-2b}\int_{\mathbb D}\frac{g(z)}{|z|^m|1-z\overline{w}|^{2+\alpha}}d\mu_{a,b}(z)\\
     &=&\ds(1-|w|^2)^{\alpha-a} |w|^{m-2b}\int_{\mathbb D}\frac{g(z)|z|^{2b-m}(1-|z|^2)^a}{|1-z\overline{w}|^{2+\alpha}}dA(z).
  \end{array}$$

We distinguish two cases:
\begin{enumerate}
  \item First case $p=1$: $S$ is bounded on  $L^1(\mathbb D,d\mu_{a,b})$ gives  $S^\star$ is bounded on $L^\infty(\mathbb D,d\mu_{a,b})$. By applying $S^\star$ on the constant function $g\equiv 1$, we obtain
      $$\sup_{w\in\mathbb D^*}(1-|w|^2)^{\alpha-a} |w|^{m-2b}\int_{\mathbb D}\frac{|z|^{2b-m}(1-|z|^2)^a}{|1-z\overline{w}|^{2+\alpha}}dA(z)<+\infty.$$
      Thanks to Lemma \ref{l2}, we get $m-2b\geq0$ and $\alpha-a>0$. The desired inequalities are proved.
  \item Second case $p>1$: Let $q>1$ such that $\frac{1}{p}+\frac{1}{q}=1$. Again  by applying $S^\star$ on the function $f_N$ for $N$ large enough, we obtain
    $$\begin{array}{l}
        \|S^\star f_N\|_{a,b,q}^q=\\
        =\ds\int_{\mathbb D}(1-|w|^2)^{a+q(\alpha-a)} |w|^{2b+(m-2b)q}\left(\int_{\mathbb D}\frac{|z|^{2b-m}(1-|z|^2)^{a+N}}{|1-z\overline{w}|^{2+\alpha}}dA(z)\right)^qdA(w)
      \end{array}
    $$
    is finite and hence all inequalities $$\ds\frac{mp}{2}-1<b<\frac{mp}{2}-1+p$$ and $ p(\alpha+1)>a+1$ hold.
\end{enumerate}
$"(3)\Longrightarrow (2)"$  We start by the case $p=1$. We assume that $m-2<2b\leq m$ and $\alpha>a$. Using Lemma \ref{l2}, one can prove easily the boundedness of $S$ on $L^1(\mathbb D,d\mu_{a,b})$.\\
Now for $p>1$, to prove the boundedness of $S$ on $L^p(\mathbb D,d\mu_{a,b})$ we will use the Schur's test. We set
$$h(z)=\frac1{|z|^t(1-|z|^2)^s},\quad \hbox{and}\quad  \kappa(z,w)= \frac{(1-|w|^2)^{\alpha-a} |w|^{m-2b}}{|z|^m|1-z\overline{w}|^{2+\alpha}}.$$
Thanks to Lemma \ref{l2}, if
\begin{equation}\label{eq 3.1}
    \frac mq\leq t<\frac{m+2}q,\quad 0<s<\frac{\alpha+1}q
\end{equation}
then
$$\begin{array}{lcl}
    \ds\int_{\mathbb D} \kappa(z,w)h(w)^qd\mu_{a,b}(w)&=&\ds\frac1{|z|^m}\int_{\mathbb D}\frac{(1-|w|^2)^{\alpha-sq} |w|^{m-tq}}{|1-z\overline{w}|^{2+\alpha}}dA(w)\\
    &\leq&\ds \frac{c_1}{|z|^m(1-|z|^2)^{sq}}=c_1.|z|^{tq-m}h(z)^q\leq c_1.h(z)^q
  \end{array}
$$
for some positive constant $c_1>0$. \\
Similarly, if
\begin{equation}\label{eq 3.2}
    \frac{2b-m}p\leq t<\frac{2b-m+2}p,\quad \frac{a-\alpha}{p}<s<\frac{a+1}p
\end{equation}
then
$$\begin{array}{l}
    \ds\int_{\mathbb D} \kappa(z,w)h(z)^pd\mu_{a,b}(z)\\
    =\ds(1-|w|^2)^{\alpha-a} |w|^{m-2b}\int_{\mathbb D} \frac{|z|^{2b-m-tp}(1-|z|^2)^{a-sp}}{|1-z\overline{w}|^{2+\alpha}}dA(z)\\
    \leq\ds c_2.\frac{|w|^{m-2b}}{(1-|w|^2)^{sp}}=c_2.|w|^{m-2b+tp}h(w)^p\leq c_2.h(w)^p
  \end{array}
$$
with $c_2>0$. Thanks to the hypothesis given in $(3)$, we have
$$\left]\frac{m}{q},\frac{m+2}q\right[\cap\left]\frac{2b-m}p,\frac{2b-m+2}p\right[\neq\emptyset,\quad \left]0,\frac{\alpha+1}q\right[\cap \left]\frac{a-\alpha}{p},\frac{a+1}p\right[\neq\emptyset.$$
This proves the existence of $t$ and $s$ satisfying (\ref{eq 3.1}) and (\ref{eq 3.2}). Thanks to Schur's test, $S$ is bounded on $L^p(\mathbb D,d\mu_{a,b})$.
\end{proof}

\begin{theo}
    For every $1<p<+\infty$ and $-1<a,b<+\infty$, the topological dual of $\mathcal A_{a,b}^p(\mathbb D^*)$ is the space $\mathcal A_{a,b}^q(\mathbb D^*)$ under the integral pairing
    $$\langle f,g\rangle_{a,b}=\int_{\mathbb D}f(z)\overline{g(z)}d\mu_{a,b}(z),\quad \forall\; f\in \mathcal A_{a,b}^p(\mathbb D^*),\ g\in \mathcal A_{a,b}^q(\mathbb D^*)$$
    where $q$ is the conjugate exponent of $p$.
\end{theo}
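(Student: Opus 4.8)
The plan is to realize $\mathcal A^q_{a,b}(\mathbb D^*)$ as $\bigl(\mathcal A^p_{a,b}(\mathbb D^*)\bigr)^*$ via the map $J\colon g\mapsto \langle\,\cdot\,,g\rangle_{a,b}$ and to show $J$ is a bounded isomorphism. The easy half is that $J$ is well defined, bounded, and injective. Boundedness is immediate from H\"older's inequality: for $f\in\mathcal A^p_{a,b}(\mathbb D^*)$ and $g\in\mathcal A^q_{a,b}(\mathbb D^*)$ one has $|\langle f,g\rangle_{a,b}|\le \|f\|_{a,b,p}\|g\|_{a,b,q}$, so $\|Jg\|\le\|g\|_{a,b,q}$. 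For injectivity I would test against the monomials: since $\langle z^j,z^k\rangle_{a,b}=0$ for $j\ne k$ and $\langle z^k,z^k\rangle_{a,b}>0$, if $Jg=0$ then every Laurent coefficient of $g$ vanishes, hence $g=0$.

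The substance is the surjectivity of $J$. Fix $\Lambda\in\bigl(\mathcal A^p_{a,b}(\mathbb D^*)\bigr)^*$. Since $\mathcal A^p_{a,b}(\mathbb D^*)$ is closed in $L^p(\mathbb D,d\mu_{a,b})$ (established as a consequence of Proposition~\ref{p1}), the Hahn--Banach theorem extends $\Lambda$ to a bounded functional on $L^p(\mathbb D,d\mu_{a,b})$; as $1<p<\infty$, the Riesz representation $(L^p)^*=L^q$ furnishes $G\in L^q(\mathbb D,d\mu_{a,b})$ with $\Lambda f=\langle f,G\rangle_{a,b}$ for every $f\in\mathcal A^p_{a,b}(\mathbb D^*)$. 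The whole point is then to correct $G$, which lies only in $L^q$, into a genuine element $g\in\mathcal A^q_{a,b}(\mathbb D^*)$ inducing the same functional on $\mathcal A^p_{a,b}(\mathbb D^*)$.

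Here Theorem~\ref{th1} enters. I would choose the free parameters $\alpha$ and $m$ so that condition $(3)$ holds: taking $\alpha$ large secures $p(\alpha+1)>a+1$, and the integer $m$ is pinned down by the inequalities relating $m$, $b$ and the exponent (one checks that the admissible interval contains the pole order $m_{p,b}$). Then the suitably normalized operator $T$ becomes a bounded projection $\mathbb P$ of $L^p(\mathbb D,d\mu_{a,b})$ onto $\mathcal A^p_{a,b}(\mathbb D^*)$: a direct Beta-integral computation gives $\mathbb P z^n=z^n$ for all $n\ge -m$, so $\mathbb P$ reproduces the monomials and hence all of $\mathcal A^p_{a,b}(\mathbb D^*)$, while the kernel shape $z^{-m}(1-z\overline w)^{-(2+\alpha)}$ shows $\mathbb P$ maps into meromorphic functions. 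Using the adjoint $\mathbb P^\star$ relative to $\langle\,\cdot\,,\cdot\,\rangle_{a,b}$, bounded on $L^q$ by Theorem~\ref{th1} read with $q$ in place of $p$, I set $g:=\mathbb P^\star G$ and compute, for $f\in\mathcal A^p_{a,b}(\mathbb D^*)$, that $\Lambda f=\langle \mathbb P f,G\rangle_{a,b}=\langle f,\mathbb P^\star G\rangle_{a,b}=\langle f,g\rangle_{a,b}$.

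The main obstacle is the final verification that $g=\mathbb P^\star G$ genuinely lies in $\mathcal A^q_{a,b}(\mathbb D^*)$, that is, that it is both meromorphic and $q$-integrable. The $q$-integrability is precisely the boundedness of $\mathbb P^\star$ on $L^q$ supplied by Theorem~\ref{th1}; the meromorphy is the delicate part, since the naive adjoint carries a non-holomorphic factor of the form $(1-|w|^2)^{\alpha-a}|w|^{-2b}\overline w^{\,m}$ in front of a holomorphic integral, and this factor collapses to a meromorphic weight only under the self-adjoint normalization ($\alpha=a$, with $m$ matched to $b$). Reconciling this self-adjointness with the boundedness ranges of Theorem~\ref{th1}, together with the compatibility of the pole order $m$ with the two exponents, is the crux. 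I would handle it through the explicit Beta-function identities for $\langle z^k,z^k\rangle_{a,b}$, reducing the claim to convergence of the reconstructed Laurent series of $g$ in $\|\cdot\|_{a,b,q}$, and invoking the density of $\mathcal A^2_{a,b}(\mathbb D^*)$, hence of Laurent polynomials, in both $\mathcal A^p_{a,b}(\mathbb D^*)$ and $\mathcal A^q_{a,b}(\mathbb D^*)$ to pass from monomials to general elements. Once $g\in\mathcal A^q_{a,b}(\mathbb D^*)$ is secured, $J$ is onto and the identification $\bigl(\mathcal A^p_{a,b}(\mathbb D^*)\bigr)^*=\mathcal A^q_{a,b}(\mathbb D^*)$ follows.
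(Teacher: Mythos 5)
Your proposal follows exactly the paper's own route --- Hahn--Banach extension, Riesz representation by some $G\in L^q(\mathbb D,d\mu_{a,b})$, boundedness of the projection from Theorem~\ref{th1} with $m=m_{p,b}$, then $g:=\mathbb P^\star G$ --- and, to your credit, you flag precisely the step that the paper asserts without justification: that $\mathbb P^\star G$ is actually meromorphic. But your proposal does not fill that gap (the appeal to Beta identities, Laurent reconstruction and density is a plan, not an argument), and the gap is unfillable, because the obstruction you identified is real. As you compute, the adjoint of the projection with respect to $\langle\cdot,\cdot\rangle_{a,b}$ carries the factor $(1-|w|^2)^{\alpha-a}\,\overline w^{\,m}|w|^{-2b}$ in front of a holomorphic integral; killing the first factor forces $\alpha=a$, and then $\overline w^{\,m}|w|^{-2b}$ satisfies $\partial_{\overline w}\left(\overline w^{\,m}|w|^{-2b}\right)=(m-b)\,\overline w^{\,m-1}|w|^{-2b}$, so it is meromorphic only when $b=m$. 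For non-integer $b$ this is impossible, and even for $b\in\mathbb N$ one has $m_{p,b}=b$ only for $p$ in a restricted range. No choice of the free parameters in Theorem~\ref{th1} escapes this, because the defect lies in the statement, not in the method.

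Indeed the theorem is false whenever the admissible pole orders differ, $m_{p,b}\neq m_{q,b}$, which is the generic situation. Take $b=0$, any $a>-1$, and $1<p<2$, so $q>2$, $m_{p,0}=1$, $m_{q,0}=0$. Then $z^{-1}\in\mathcal A^p_{a,0}(\mathbb D^*)$, while every $g\in\mathcal A^q_{a,0}(\mathbb D^*)$ extends holomorphically across $0$ (a pole of order $\nu\geq1$ would make $\int_0 r^{1-q\nu}dr$ diverge). The residue functional $\Lambda(f)=\frac{1}{2\pi i}\oint_{|z|=1/2}f(z)\,dz$ is bounded on $\mathcal A^p_{a,0}(\mathbb D^*)$ by Proposition~\ref{p1} and $\Lambda(z^{-1})=1$; yet for every $g=\sum_{k\geq0}b_kz^k$ in $\mathcal A^q_{a,0}(\mathbb D^*)$, integration mode by mode in polar coordinates gives $\langle z^{-1},g\rangle_{a,0}=0$, since $\int_0^{2\pi}e^{-i(k+1)\theta}d\theta=0$ for all $k\geq0$. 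So $\Lambda$ lies in the dual but not in the image of the pairing: the map $J$ is injective but not surjective, and applying this to a bounded $\psi$ representing $\Lambda$ shows concretely that the paper's final claim $\mathbb P_{a,m}^\star\psi\in\mathcal A^q_{a,b}(\mathbb D^*)$ fails. Symmetrically, for $p>2$ and $b=0$ one has $m_{q,0}=1>0=m_{p,0}$, and $z^{-1}\in\mathcal A^q_{a,0}(\mathbb D^*)$ satisfies $\langle f,z^{-1}\rangle_{a,0}=0$ for all $f\in\mathcal A^p_{a,0}(\mathbb D^*)$, so $J$ is not even injective --- which also undercuts your ``easy half,'' since the monomials $z^j$ with $j\geq-m_{p,b}$ available in $\mathcal A^p_{a,b}(\mathbb D^*)$ cannot detect the deeper Laurent coefficients of $g$. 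The duality can only hold as stated when $m_{p,b}=m_{q,b}$ (for instance $p=q=2$); in general the dual of $\mathcal A^p_{a,b}(\mathbb D^*)$ under this pairing must be a space of meromorphic functions whose permitted pole order is governed by $p$, namely $m_{p,b}$, not by $q$. Your instinct about where the difficulty sits was exactly right; the correct conclusion to draw from it is that the theorem and the paper's proof need to be repaired, not that your sketch needs to be completed.
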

\begin{proof}
Thanks to H\"older inequality, every function $g\in \mathcal A_{a,b}^q(\mathbb D^*)$ defines a bounded linear form on $\mathcal A_{a,b}^p(\mathbb D^*)$ via the above integral pairing. Conversely, Let $G$ be a bounded linear form on $\mathcal A_{a,b}^p(\mathbb D^*)$. Then thanks to Hahn Banach extension theorem, one can extend $G$ to a bounded linear form on $L^p(\mathbb D, d\mu_{a,b})$ (still denoted by $G$) with the same norm. By duality, there exists  $\psi\in L^q(\mathbb D, d\mu_{a,b})$ such that
$$G(f)=\langle f,\psi\rangle_{a,b},\quad \forall\; f\in \mathcal A_{a,b}^p(\mathbb D^*).$$
Claim that if $m=m_{p,b}$ given in (\ref{eq 1.2}), then thanks to Theorem \ref{th1}, $\mathbb P_{a,m}$ maps continuously  $L^p(\mathbb D, d\mu_{a,b})$ onto $\mathcal A_{a,b}^p(\mathbb D^*)$ and $\mathbb P_{a,m}f=f$ for every $f\in\mathcal A_{a,b}^p(\mathbb D^*)$. It follows that
$$G(f)=\langle f,\psi\rangle_{a,b}=\langle \mathbb P_{a,m}f,\psi\rangle_{a,b}=\langle f,\mathbb P_{a,m}^\star\psi\rangle_{a,b},\quad \forall\; f\in \mathcal A_{a,b}^p(\mathbb D^*).$$
If we  set $g=\mathbb P_{a,m}^\star\psi$ then $g\in \mathcal A_{a,b}^q(\mathbb D^*)$ and $G(f)=\langle f,g\rangle$ for every $f\in \mathcal A_{a,b}^p(\mathbb D^*).$
\end{proof}

\section{Inequalities on $\mathcal A_{\alpha,\beta}^p(\mathbb D^*)$}
The aim here is to extend the two famous Hardy-Littlewood and Fej\'er-Riesz inequalities to our new spaces, these inequalities were proved firstly on Hardy spaces, then on Bergman spaces with some applications. In our case we give only one application on Toeplitz operators. To reach this aim, for  a holomorphic function $f$ on $\mathbb D^*$ and $0<r<1$, we consider the main value on the circle:
    $$\begin{array}{lcl}
        M_p(r,f)&:=&\ds\left(\frac{1}{2\pi}\int_0^{2\pi}|f(re^{i\theta})|^pd\theta\right)^{\frac1p}\\
         M_\infty(r,f)&:=&\ds\sup_{\theta\in[0,2\pi]}|f(re^{i\theta})|.
      \end{array}$$
      We set $$\mathscr J(r)=\mathscr J_{\alpha,\beta,p}(r):=\frac{2r^{pm_{p,\beta}}}{\mathscr B(\alpha+1,\beta+1)}\int_r^1t^{2\beta-pm_{p,\beta}+1}(1-t^2)^\alpha dt.$$
\subsection{Hardy-Littlewood inequality}
To prove the Hardy-Littlewood inequality on $\mathcal A_{\alpha,\beta}^p(\mathbb D^*)$, we need to prove firstly the following lemma:
    \begin{lem}\label{l3}
        For every $p>1$ and $f\in\mathcal A_{\alpha,\beta}^p(\mathbb D^*)$, we have
        $$M_p(r,f)\leq  \frac{\|f\|_{\alpha,\beta,p}}{\mathscr J(r)^{\frac1p}}.$$
        In particular we have
        $$M_p(r,f)\leq  \frac{\kappa_1\|f\|_{\alpha,\beta,p}}{r^{\max(\frac{2\beta}p,\nu_f)}(1-r^2)^{\frac{\alpha+1}p}}$$
        where $\kappa_1=\left((\alpha+1)\mathscr B(\alpha+1,\beta+1)\right)^{\frac1p}.$
    \end{lem}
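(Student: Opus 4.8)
The plan is to reduce the statement to a one-dimensional integral inequality for the circular means $M_p(t,f)$. Writing $d\mu_{\alpha,\beta}$ in polar coordinates and using $\frac1{2\pi}\int_0^{2\pi}|f(te^{i\theta})|^p\,d\theta=M_p(t,f)^p$, I would first record the identity
$$\|f\|_{\alpha,\beta,p}^p=\frac{2}{\mathscr B(\alpha+1,\beta+1)}\int_0^1 M_p(t,f)^p\,t^{2\beta+1}(1-t^2)^\alpha\,dt.$$
Since $f$ has at worst a pole of order $\nu_f$ at the origin, the function $\widetilde f(z)=z^{\nu_f}f(z)$ is holomorphic on $\mathbb D$, so $|\widetilde f|^p$ is subharmonic there and the circular mean $t\mapsto M_p(t,\widetilde f)^p=t^{p\nu_f}M_p(t,f)^p$ is non-decreasing (the classical monotonicity of integral means, in the same spirit as the subharmonicity already used for Proposition \ref{p1}). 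Consequently $M_p(t,f)^p\geq (r/t)^{p\nu_f}M_p(r,f)^p$ for every $t\geq r$.

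Restricting the integral above to $[r,1]$ and inserting this lower bound yields
$$\|f\|_{\alpha,\beta,p}^p\geq M_p(r,f)^p\cdot\frac{2r^{p\nu_f}}{\mathscr B(\alpha+1,\beta+1)}\int_r^1 t^{2\beta-p\nu_f+1}(1-t^2)^\alpha\,dt=:M_p(r,f)^p\,\mathscr J_{\nu_f}(r),$$
the sharp form of the first inequality with the true pole order $\nu_f$ in place of $m_{p,\beta}$. To recover the stated inequality I would compare $\mathscr J_{\nu_f}$ with $\mathscr J=\mathscr J_{m_{p,\beta}}$: writing each integrand in the form $(r/t)^{p\lambda}\,t^{2\beta+1}(1-t^2)^\alpha$ (with $\lambda=\nu_f$, resp.\ $\lambda=m_{p,\beta}$) and using $r/t\leq1$ on $[r,1]$ together with $\nu_f\leq m_{p,\beta}$ from (\ref{eq 1.2}), one gets $(r/t)^{p\nu_f}\geq(r/t)^{pm_{p,\beta}}$ pointwise, hence $\mathscr J(r)\leq\mathscr J_{\nu_f}(r)$ and therefore $M_p(r,f)\leq\|f\|_{\alpha,\beta,p}/\mathscr J(r)^{1/p}$.

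For the ``in particular'' estimate I would bound $\mathscr J_{\nu_f}(r)$ from below. The substitution $u=t^2$ converts its integral into $\tfrac12\int_{r^2}^1 u^{\beta-p\nu_f/2}(1-u)^\alpha\,du$; replacing the monomial factor by its minimum over $[r^2,1]$, namely $u^{\beta-p\nu_f/2}\geq r^{\max(2\beta-p\nu_f,\,0)}$, and using $\int_{r^2}^1(1-u)^\alpha\,du=(1-r^2)^{\alpha+1}/(\alpha+1)$, I obtain
$$\mathscr J_{\nu_f}(r)\geq\frac{r^{\max(2\beta,\,p\nu_f)}(1-r^2)^{\alpha+1}}{(\alpha+1)\mathscr B(\alpha+1,\beta+1)},$$
since $p\nu_f+\max(2\beta-p\nu_f,0)=\max(2\beta,p\nu_f)$. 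Taking $p$-th roots, recalling $\kappa_1^p=(\alpha+1)\mathscr B(\alpha+1,\beta+1)$ and $\max(2\beta,p\nu_f)/p=\max(2\beta/p,\nu_f)$, produces exactly the second inequality.

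The only genuinely nontrivial ingredient is the monotonicity of the circular means of $|\widetilde f|^p$; everything else is bookkeeping with exponents. The step that needs care is that the power of $r$ must emerge as $\max(2\beta/p,\nu_f)$, which is why the sharp estimate has to be carried out with the actual pole order $\nu_f$ and only afterwards weakened to $\mathscr J$ for the first inequality; performing the whole computation with $m_{p,\beta}$ from the start would yield only the strictly weaker exponent $\max(2\beta/p,m_{p,\beta})$.
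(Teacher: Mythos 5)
Your proof is correct, and in outline it follows the same strategy as the paper: the polar--coordinates identity for $\|f\|_{\alpha,\beta,p}^p$, the monotonicity of the circular means of $z^\lambda f$ (holomorphic after removing the pole), restriction of the radial integral to $[r,1]$, and finally the lower bound for the resulting Beta--type integral via $u^{\beta-p\lambda/2}\geq r^{\max(2\beta-p\lambda,0)}$ and $\int_{r^2}^1(1-u)^\alpha du=(1-r^2)^{\alpha+1}/(\alpha+1)$. The one genuine difference is in the bookkeeping, and it matters: the paper sets $F(z)=z^{m}f(z)$ with $m=m_{p,\beta}$ from the start, which yields the first inequality $M_p(r,f)\leq\|f\|_{\alpha,\beta,p}/\mathscr J(r)^{1/p}$ directly, but then its lower bound $\mathscr J(r)\geq r^{\max(2\beta,pm)}(1-r^2)^{\alpha+1}/\bigl((\alpha+1)\mathscr B(\alpha+1,\beta+1)\bigr)$ produces the ``in particular'' estimate with exponent $\max(\tfrac{2\beta}{p},m_{p,\beta})$ rather than the stated $\max(\tfrac{2\beta}{p},\nu_f)$; since $\nu_f\leq m_{p,\beta}$ and $0<r<1$, the statement with $\nu_f$ is strictly stronger, so the paper's proof as written does not quite deliver its own claim. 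Your version, which carries out the whole computation with the true pole order $\nu_f$ (getting $\|f\|_{\alpha,\beta,p}^p\geq M_p(r,f)^p\,\mathscr J_{\nu_f}(r)$) and only afterwards weakens $\mathscr J_{\nu_f}(r)\geq\mathscr J(r)$ via the pointwise comparison $(r/t)^{p\nu_f}\geq(r/t)^{pm_{p,\beta}}$ on $[r,1]$, proves both displayed inequalities exactly as stated; your closing remark correctly identifies why the order of operations is forced. In short: same method, but your execution repairs a small mismatch between the paper's statement and its proof.
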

    \begin{proof}
        Let $f\in\mathcal A_{\alpha,\beta}^p(\mathbb D^*)$ and $0<r<1$. We set $F(z)=z^mf(z)$ with $m=m_{p,\beta}$. As $F$ is holomorphic on $\mathbb D$, then we obtain
        $$\begin{array}{lcl}
        \ds \|f\|_{\alpha,\beta,p}^p&=&\ds \frac{1}{\pi \mathscr B(\alpha+1,\beta+1)}\int_0^1\int_0^{2\pi}|f(te^{i\theta})|^p(1-t^2)^{\alpha}t^{2\beta+1} dtd\theta\\
        &=&\ds \frac{2}{\mathscr B(\alpha+1,\beta+1)}\int_0^1M_p^p(t,f)(1-t^2)^{\alpha}t^{2\beta+1}dt\\
        &=&\ds \frac{2}{\mathscr B(\alpha+1,\beta+1)}\int_0^1M_p^p(t,F)(1-t^2)^{\alpha}t^{2\beta-pm+1}dt\\
        &\geq&\ds \frac{2}{\mathscr B(\alpha+1,\beta+1)}M_p^p(r,F)\int_r^1(1-t^2)^{\alpha}t^{2\beta-pm+1}dt\\
        &=&\ds M_p^p(r,f)\mathscr J(r)
      \end{array}$$
      and the first inequality is proved. The particular case can be deduced from the following inequality:
      $$\begin{array}{lcl}
           \mathscr J(r)&=&\ds\frac{r^{pm}}{\mathscr B(\alpha+1,\beta+1)}\int_{r^2}^1(1-t)^{\alpha}t^{\beta-pm/2}dt\\
           & & \\
           &\geq &\ds \left\{\begin{array}{lcl}
                               \ds\frac{r^{2\beta}(1-r^2)^{\alpha+1}}{(\alpha+1)\mathscr B(\alpha+1,\beta+1)}& if & 2\beta \geq pm\\
                               & & \\
                               \ds\frac{r^{pm}(1-r^2)^{\alpha+1}}{(\alpha+1)\mathscr B(\alpha+1,\beta+1)}& if & 2\beta<pm
                             \end{array}\right.\\
           & & \\
           & \geq& \ds \frac{r^{\max(2\beta,pm)}(1-r^2)^{\alpha+1}}{(\alpha+1)\mathscr B(\alpha+1,\beta+1)}.
        \end{array}
      $$
    \end{proof}
    Now we can prove the Hardy-Littlewood inequality on meromorphic Bergman spaces:
    \begin{theo}
        For every $1<p\leq \tau\leq \infty$, there exists a positive constant $\kappa$ such that for every $f\in\mathcal A_{\alpha,\beta}^p(\mathbb D^*)$ we have
        $$M_\tau(r,f)\leq \frac{\kappa\|f\|_{\alpha,\beta,p}}{r^{\max(\frac{2\beta}p,m)}(1-r^2)^{\frac{\alpha+2}p-\frac1\tau}}
        $$
        where $m=m_{p,\beta}$.
    \end{theo}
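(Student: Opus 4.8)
The plan is to reduce the statement to its two endpoints $\tau=p$ and $\tau=\infty$ and then interpolate. The bridge is the elementary pointwise inequality $|f(re^{i\theta})|^\tau\le M_\infty(r,f)^{\tau-p}\,|f(re^{i\theta})|^p$, valid since $\tau\ge p$; integrating in $\theta$ yields
$$M_\tau(r,f)\le M_\infty(r,f)^{1-\frac{p}{\tau}}\,M_p(r,f)^{\frac{p}{\tau}},\qquad p\le\tau\le\infty.$$
(For $\tau=\infty$ this reads $M_\infty\le M_\infty$ and for $\tau=p$ it reads $M_p\le M_p$, so both endpoints are consistent.) Hence it suffices to control $M_p(r,f)$ and $M_\infty(r,f)$ separately and to check that the resulting exponents combine correctly.

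First I would record the $M_p$ bound. Lemma \ref{l3} gives $M_p(r,f)\le\kappa_1\|f\|_{\alpha,\beta,p}\,r^{-\max(2\beta/p,\nu_f)}(1-r^2)^{-(\alpha+1)/p}$; since $\nu_f\le m=m_{p,\beta}$ and $0<r<1$, one has $\max(2\beta/p,\nu_f)\le\max(2\beta/p,m)$ and therefore $r^{-\max(2\beta/p,\nu_f)}\le r^{-\max(2\beta/p,m)}$, so the same estimate holds with $\nu_f$ replaced by $m$. This is exactly the desired bound at the endpoint $\tau=p$, where the exponent $\frac{\alpha+2}{p}-\frac1\tau$ specializes to $\frac{\alpha+1}{p}$.

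The main work is the endpoint $M_\infty$ bound
$$M_\infty(r,f)\le\frac{\kappa_2\|f\|_{\alpha,\beta,p}}{r^{\max(2\beta/p,m)}(1-r^2)^{(\alpha+2)/p}}.$$
Here I would fix $z_0$ with $|z_0|=r$ and set $\rho=\tfrac12\min(r,1-r)$, so that $\overline{\mathbb D}(z_0,\rho)\subset\mathbb D^*$ exactly as in Proposition \ref{p1}. Subharmonicity of $|f|^p$ gives $|f(z_0)|^p\le\rho^{-2}\int_{\mathbb D(z_0,\rho)}|f|^p\,dA$, and since $\mathbb D(z_0,\rho)$ is contained in the annulus $r-\rho\le|w|\le r+\rho$, I would bound this area integral by $2\int_{r-\rho}^{r+\rho}M_p^p(t,f)\,t\,dt$. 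Inserting the $M_p$ bound of the previous step and using that $t\asymp r$ and $1-t^2\asymp 1-r^2$ on this annulus (the comparison constants being controlled by $\rho\le\tfrac12\min(r,1-r)$), the integral is at most a constant times $r\rho\,\|f\|_{\alpha,\beta,p}^p\,r^{-p\max(2\beta/p,m)}(1-r^2)^{-(\alpha+1)}$. Dividing by $\rho^2$ leaves the factor $r/\rho$, which is a bounded constant when $r\le\tfrac12$ and is comparable to $(1-r^2)^{-1}$ when $r\ge\tfrac12$: in the first range $(1-r^2)^{-(\alpha+2)/p}\ge1$ makes the plain estimate suffice, while in the second range this extra $(1-r^2)^{-1}$ upgrades the $(1-r^2)$-exponent from $\alpha+1$ to $\alpha+2$. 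Taking $p$-th roots gives the displayed $M_\infty$ bound.

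Finally I would combine the two endpoint estimates through the interpolation inequality. The powers of $\|f\|_{\alpha,\beta,p}$ add to $(1-\frac p\tau)+\frac p\tau=1$, the powers of $r^{-\max(2\beta/p,m)}$ likewise add to $1$, and the powers of $(1-r^2)$ combine as $-\frac{\alpha+2}{p}\bigl(1-\frac p\tau\bigr)-\frac{\alpha+1}{p}\cdot\frac p\tau=-\frac{\alpha+2}{p}+\frac1\tau$, which is precisely the exponent in the statement; taking $\kappa=\kappa_2^{1-p/\tau}\kappa_1^{p/\tau}$ (uniformly bounded in $\tau$) finishes the proof. I expect the delicate point to be the $M_\infty$ estimate: one must keep the annular comparison constants uniform for $r$ near both $0$ and $1$, and verify that the width $\rho\asymp\min(r,1-r)$ supplies exactly the extra $(1-r^2)^{-1/p}$ needed to pass from the $(\alpha+1)/p$ exponent of $M_p$ to the $(\alpha+2)/p$ exponent of $M_\infty$.
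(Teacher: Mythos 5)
Your proposal is correct, and its skeleton is the paper's own: the two endpoints $\tau=p$ and $\tau=\infty$ glued by the interpolation inequality $M_\tau\le M_\infty^{1-p/\tau}M_p^{p/\tau}$, with the $\tau=p$ endpoint read off from Lemma \ref{l3} (weakening $\nu_f$ to $m$, which is legitimate since $\nu_f\le m$ and $0<r<1$). Where you genuinely diverge is the $M_\infty$ endpoint, which is indeed the main work. The paper sets $F(z)=z^mf(z)$, applies the Cauchy integral formula on the circle of radius $s=\frac{1+r}{2}$, and then H\"older's inequality with the conjugate exponent $q$; there the kernel estimate
$$\left(\frac1{2\pi}\int_0^{2\pi}\frac{s^q}{|se^{it}-re^{i\theta}|^{q}}\,dt\right)^{1/q}\lesssim (1-r^2)^{-1/p}$$
supplies the extra factor that upgrades the exponent from $\frac{\alpha+1}{p}$ to $\frac{\alpha+2}{p}$, while $M_p(s,f)$ is controlled by Lemma \ref{l3}. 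You instead recycle the sub-mean-value argument of Proposition \ref{p1}: averaging $|f|^p$ over $\mathbb D(z_0,\rho)$ with $\rho=\frac12\min(r,1-r)$, passing to the annular integral $2\int_{r-\rho}^{r+\rho}M_p^p(t,f)\,t\,dt$, and letting the factor $r/\rho\lesssim(1-r^2)^{-1}$ produce the same upgrade; your case split $r\le\frac12$ versus $r\ge\frac12$, the comparisons $t\asymp r$ and $1-t^2\asymp1-r^2$ on the annulus, and the identity $\int_{r-\rho}^{r+\rho}t\,dt=2r\rho$ are all sound, so the endpoint bound and the final bookkeeping of exponents go through. The trade-off: the paper's route is the classical one-dimensional Hardy--Littlewood argument (integrals over circles only), but its H\"older step genuinely requires $p>1$; your route is more elementary and self-contained within the paper's toolkit, and since neither it nor the proof of Lemma \ref{l3} actually uses $p>1$, it would in fact establish the theorem for all $0<p\le\tau\le\infty$, a slightly more general statement than the one claimed.
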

    The Hardy-Littlewood inequality is proved in \cite{So} for classical Bergman spaces ($\beta=0$).
    \begin{proof}
        The case $\tau=p$ is simply the previous lemma. Let we start by the case $\tau=\infty$.  Let $f\in\mathcal A_{\alpha,\beta}^p(\mathbb D^*)$ and $0<r<1$. We set $F(z)=z^{m}f(z)$. Again as $F$ is holomorphic on $\mathbb D$, then thanks to the Cauchy Formula, we have
        $$F(re^{i\theta})=\frac{s}{2\pi}\int_0^{2\pi}\frac{s^{m}e^{im t}f(s e^{it})}{s e^{it}-re^{i\theta}}e^{it}dt$$
        where $s=\frac{1+r}2$. Applying H\"{o}lder's inequality  $(\frac1p+\frac1{q}=1)$ and Lemma \ref{l3}, we obtain
        $$\begin{array}{lcl}
             r^{m}|f(re^{i\theta})|&\leq & \ds\left(\frac1{2\pi}\int_0^{2\pi}s^{pm}|f(s e^{it})|^pdt\right)^{\frac1p} \left(\frac1{2\pi}\int_0^{2\pi}\frac{s^{q}}{|s e^{it}-re^{i\theta}|^{q}}dt\right)^{\frac1{q}}\\
             &\leq&\ds s^{m}M_p(s,f)\left(\frac{\kappa_2}{\left(\frac{1-r}{1+r}\right)^{q-1}}\right)^{\frac1{q}}\\
             &\leq&\ds s^{m} \frac{\kappa_1\|f\|_{\alpha,\beta,p}}{s^{\max(\frac{2\beta}p,m)}(1-s^2)^{\frac{\alpha+1}p}} \frac{\kappa_3}{(1-r^2)^{1-\frac1{q}}}\\
             &\leq&\ds\frac{\kappa_4\|f\|_{\alpha,\beta,p}}{r^{\max(\frac{2\beta}p-m,0)}(1-r^2)^{\frac{\alpha+2}p}}
          \end{array}
        $$
        It follows that
        $$M_\infty(r,f)\leq \frac{\kappa_4\|f\|_{\alpha,\beta,p}}{r^{\max(\frac{2\beta}p,m)}(1-r^2)^{\frac{\alpha+2}p}}.$$
        Let now $p<\tau<\infty$. We have
        $$\begin{array}{lcl}
             M_\tau(r,f)&= &\ds \left(\frac1{2\pi}\int_0^{2\pi}|f(re^{it})|^p|f(re^{it})|^{\tau-p}dt\right)^{\frac1\tau}\\
             &\leq&\ds \ds M_\infty^{1-\frac p\tau}(r,f)M_p^{\frac p\tau}(r,f)\\
             &\leq&\ds \left( \frac{\kappa_4\|f\|_{\alpha,\beta,p}}{r^{\max(\frac{2\beta}p,m)}(1-r^2)^{\frac{\alpha+2}p}}\right)^{1-\frac p\tau} \left(\frac{\kappa_1\|f\|_{\alpha,\beta,p}}{r^{\max(\frac{2\beta}p,m)}(1-r^2)^{\frac{\alpha+1}p}}\right)^{\frac p\tau}\\
             &=& \ds \frac{\kappa\|f\|_{\alpha,\beta,p}}{r^{\max(\frac{2\beta}p,m)}(1-r^2)^{\frac{\alpha+2}p-\frac1\tau}}
          \end{array}
        $$
    \end{proof}

    \subsection{Fej\'er-Riesz inequality}
    The aim here is to prove a generalization of the following lemma to meromorphic Bergman spaces.
    \begin{lem}\label{l4}(See \cite{Du})
        Let $g$  be a holomorphic function in the Hardy space $H^p(\mathbb D)$. Then for any $\xi\in\mathbb C$ with $|\xi|=1$, we have
        $$\int_{-1}^1|g(t\xi)|^pdt\leq \frac12 \|g\|_{H^p}^p:=\frac12\int_0^{2\pi}|g(e^{i\theta})|^pd\theta.$$
    \end{lem}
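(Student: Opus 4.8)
The plan is to establish the classical Fej\'er--Riesz inequality by three reductions followed by a contour estimate. First I would reduce to the case $\xi=1$. Replacing $g$ by $g_\xi(z):=g(\xi z)$ leaves the norm unchanged, since $\|g_\xi\|_{H^p}^p=\int_0^{2\pi}|g(\xi e^{i\theta})|^p\,d\theta=\int_0^{2\pi}|g(e^{i\theta})|^p\,d\theta=\|g\|_{H^p}^p$ by rotation invariance of arc length, while $\int_{-1}^1|g_\xi(t)|^p\,dt=\int_{-1}^1|g(t\xi)|^p\,dt$. Hence the general statement follows from the special case $\xi=1$, which I now treat.

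Next I would reduce to a zero-free function via the canonical factorization $g=B\,h$, where $B$ is the Blaschke product formed from the zeros of $g$ and $h\in H^p$ is zero-free in $\mathbb D$. Since $|B|\le 1$ in $\mathbb D$ and $|B|=1$ almost everywhere on the circle, one has $|g|\le|h|$ on the segment $(-1,1)$ while $\|h\|_{H^p}=\|g\|_{H^p}$; it therefore suffices to prove the inequality for $h$. Because $h$ has no zeros on the simply connected domain $\mathbb D$, the power $\psi:=h^{p/2}$ is a well-defined holomorphic function with $\psi\in H^2$, $|\psi|^2=|h|^p$, and $\|\psi\|_{H^2}^2=\|h\|_{H^p}^p$. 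The task is thus reduced to proving $\int_{-1}^1|\psi(x)|^2\,dx\le\tfrac12\|\psi\|_{H^2}^2$ for a holomorphic $\psi\in H^2$.

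The analytic core introduces the reflected function $\psi^*(z):=\overline{\psi(\bar z)}$, which is again holomorphic and satisfies $\psi^*(x)=\overline{\psi(x)}$ for real $x$; consequently $G:=\psi\,\psi^*$ is holomorphic on $\mathbb D$ with $G(x)=|\psi(x)|^2\ge 0$ on $(-1,1)$, so that $\int_{-1}^1|\psi(x)|^2\,dx=\int_{-1}^1 G(x)\,dx$. Applying Cauchy's theorem to $G$ on the upper half-disc converts this real integral into a contour integral over the upper unit semicircle, which after taking moduli yields
\[
\int_{-1}^1|\psi(x)|^2\,dx=\Bigl|\int_0^\pi G(e^{i\theta})e^{i\theta}\,d\theta\Bigr|\le\int_0^\pi|\psi(e^{i\theta})|\,|\psi(e^{-i\theta})|\,d\theta,
\]
where I used $|\psi^*(e^{i\theta})|=|\psi(e^{-i\theta})|$. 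A Cauchy--Schwarz estimate on $[0,\pi]$ bounds the right-hand side by $\sqrt{A\,B}$, with $A=\int_0^\pi|\psi(e^{i\theta})|^2\,d\theta$ and $B=\int_\pi^{2\pi}|\psi(e^{i\theta})|^2\,d\theta$, and the arithmetic--geometric mean inequality gives $\sqrt{A\,B}\le\tfrac12(A+B)=\tfrac12\|\psi\|_{H^2}^2$, which is exactly the desired bound.

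The main obstacle I anticipate is purely one of regularity: Cauchy's theorem and the manipulation of boundary values on the semicircle are immediate only when $\psi$ extends continuously to $\overline{\mathbb D}$. To handle a general $H^2$ function I would first prove the inequality for the dilations $\psi_r(z):=\psi(rz)$, which are holomorphic across $\overline{\mathbb D}$, and then let $r\uparrow 1$: the boundary means $\int_0^{2\pi}|\psi(re^{i\theta})|^2\,d\theta$ increase to $\|\psi\|_{H^2}^2$, while $\int_{-1}^1|\psi(rx)|^2\,dx\to\int_{-1}^1|\psi(x)|^2\,dx$ by dominated convergence. Passing to the limit in the inequality proved for each $\psi_r$ delivers the result for $\psi$, and all remaining steps are routine once this limiting procedure is in place.
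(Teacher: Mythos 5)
Your proof is correct, and it takes essentially the route of the source the paper itself relies on: the paper gives no proof of this lemma at all, simply citing Duren's book, and your argument (reduction to $\xi=1$ by rotation, Riesz factorization $g=Bh$ to pass to a zero-free function, the power trick $\psi=h^{p/2}$ reducing everything to $p=2$, then Cauchy's theorem applied to $\psi\psi^*$ on the upper half-disc followed by Cauchy--Schwarz and the AM--GM inequality) is precisely the classical Fej\'er--Riesz proof found there. One small repair is needed in your final limiting step: you invoke dominated convergence for $\int_{-1}^1|\psi(rx)|^2\,dx$ as $r\uparrow 1$, but no dominating function is exhibited, and the integrability of $\sup_{r}|\psi(rx)|^2$ over the segment is not obvious (it is essentially a maximal-function statement stronger than what you are proving). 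Since you only need an upper bound, Fatou's lemma suffices: from the pointwise convergence $|\psi(rx)|^2\to|\psi(x)|^2$ on $(-1,1)$ and the inequality already established for each dilation $\psi_r$, you get $\int_{-1}^1|\psi(x)|^2\,dx\le\liminf_{r\to 1^-}\int_{-1}^1|\psi(rx)|^2\,dx\le\tfrac12\|\psi\|_{H^2}^2$, which closes the argument.
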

    \begin{theo}[Fej\'er-Riesz inequality]
        For every $f\in\mathcal A_{\alpha,\beta}^p(\mathbb D^*)$ and $\xi\in\mathbb C$ with $|\xi|=1$, we have
        $$\int_{-1}^1|f(t\xi)|^p\mathscr J(|t|)dt\leq  \pi\|f\|_{\alpha,\beta,p}^p.$$
    \end{theo}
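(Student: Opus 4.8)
The plan is to reduce the meromorphic statement to the classical Hardy-space Fej\'er--Riesz inequality (Lemma \ref{l4}) by passing to the holomorphic auxiliary function $F(z)=z^mf(z)$, $m=m_{p,\beta}$, which is holomorphic on all of $\mathbb D$ by the structure results of Section 1 (exactly as in the proof of Lemma \ref{l3}). Since $|\xi|=1$, one has $|F(t\xi)|^p=|t|^{pm}|f(t\xi)|^p$, so substituting the definition of $\mathscr J$ rewrites the left-hand side as
$$\int_{-1}^1|f(t\xi)|^p\mathscr J(|t|)\,dt=\frac{2}{\mathscr B(\alpha+1,\beta+1)}\int_{-1}^1|F(t\xi)|^p\left(\int_{|t|}^1 s^{2\beta-pm+1}(1-s^2)^\alpha\,ds\right)dt.$$

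First I would swap the order of integration: the region $\{(t,s):-1<t<1,\ |t|<s<1\}$ equals $\{(t,s):0<s<1,\ -s<t<s\}$, so by Tonelli's theorem (the integrand is nonnegative) the right-hand side becomes
$$\frac{2}{\mathscr B(\alpha+1,\beta+1)}\int_0^1 s^{2\beta-pm+1}(1-s^2)^\alpha\left(\int_{-s}^{s}|F(t\xi)|^p\,dt\right)ds.$$
The heart of the argument is to estimate the inner integral for each fixed $0<s<1$. Here I would apply Lemma \ref{l4} to the dilate $G_s(z)=F(sz)$, which is holomorphic on a neighbourhood of $\overline{\mathbb D}$ (hence in $H^p$) because $F$ is holomorphic on $\mathbb D$ and $s<1$. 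After the change of variable $t=su$, Lemma \ref{l4} yields $\frac1s\int_{-s}^s|F(t\xi)|^p\,dt=\int_{-1}^1|G_s(u\xi)|^p\,du\leq\frac12\int_0^{2\pi}|F(se^{i\theta})|^p\,d\theta=\pi M_p^p(s,F)$, so that $\int_{-s}^s|F(t\xi)|^p\,dt\leq\pi s\,M_p^p(s,F)$.

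To finish I would use $M_p^p(s,F)=s^{pm}M_p^p(s,f)$ (again from $|F(se^{i\theta})|=s^m|f(se^{i\theta})|$) and collect the powers $s^{2\beta-pm+1}\cdot s\cdot s^{pm}=s^{2\beta+2}$, obtaining
$$\int_{-1}^1|f(t\xi)|^p\mathscr J(|t|)\,dt\leq\frac{2\pi}{\mathscr B(\alpha+1,\beta+1)}\int_0^1 s^{2\beta+2}(1-s^2)^\alpha M_p^p(s,f)\,ds.$$
The last step is the elementary observation that $s^{2\beta+2}\leq s^{2\beta+1}$ for $0<s<1$, which upgrades the bound to $\frac{2\pi}{\mathscr B(\alpha+1,\beta+1)}\int_0^1 s^{2\beta+1}(1-s^2)^\alpha M_p^p(s,f)\,ds=\pi\|f\|_{\alpha,\beta,p}^p$, using the polar expression for the norm recalled in the proof of Lemma \ref{l3}. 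The only delicate point is the dilation/Tonelli manoeuvre that lets one invoke Lemma \ref{l4} circle-by-circle; once that is set up the powers of $s$ conspire almost perfectly, the single surplus factor of $s$ being exactly what the harmless inequality $s^{2\beta+2}\le s^{2\beta+1}$ absorbs.
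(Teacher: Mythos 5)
Your proof is correct and is essentially the paper's own argument run in reverse: you use the same auxiliary function $F(z)=z^mf(z)$ with $m=m_{p,\beta}$, the same dilation trick to invoke Lemma \ref{l4} circle-by-circle, the same Fubini/Tonelli swap of the $(t,s)$ integration, and the same single-power-of-$s$ slack coming from the Jacobian of the dilation (the paper absorbs it via $r^{2\beta-pm+1}\leq r^{2\beta-pm}$ inside the inner integral, you via $s^{2\beta+2}\leq s^{2\beta+1}$ in the outer one). There is no gap; the only difference is that the paper starts from $\|f\|_{\alpha,\beta,p}^p$ and bounds it below by the left-hand side, while you start from the left-hand side and bound it above by $\pi\|f\|_{\alpha,\beta,p}^p$.
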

    Claim that if $\beta=0$ then we find the Zhu result \cite{Zh}.
    \begin{proof}
        Let $f\in\mathcal A_{\alpha,\beta}^p(\mathbb D^*)$ and $F(z)=z^{m}f(z)$ where $m=m_{p,\beta}$. If we set  $F_r(z)=F(rz)$ for $0<r<1$, Then $F_r\in H^p(\mathbb D)$ and thanks to Lemma \ref{l4}, for every  $\xi\in\mathbb C,\ |\xi|=1$,
        $$\int_{-1}^1|F_r(t\xi)|^pdt\leq \frac12\int_0^{2\pi}|F_r(e^{i\theta})|^pd\theta.$$
        That is
        \begin{equation}\label{eq 4.1}
        \int_{-1}^1|F(rt\xi)|^pdt\leq \frac12\int_0^{2\pi}|F(re^{i\theta})|^pd\theta.
        \end{equation}
        Thanks to Inequality (\ref{eq 4.1})  and Fubini theorem, we have
        $$\begin{array}{l}
             \|f\|_{\alpha,\beta,p}^p=\\
             =\ds \frac{1}{\pi \mathscr B(\alpha+1,\beta+1)}\int_0^1\int_0^{2\pi}|f(re^{i\theta})|^pr^{2\beta+1}(1-r^2)^{\alpha} drd\theta\\
             =\ds \frac{1}{\pi \mathscr B(\alpha+1,\beta+1)}\int_0^1\left(\int_0^{2\pi}|F(re^{i\theta})|^pd\theta\right)r^{2\beta-pm+1}(1-r^2)^{\alpha} dr\\
             \geq\ds \frac{2}{\pi \mathscr B(\alpha+1,\beta+1)}\int_0^1\left(\int_{-1}^1|F(rt\xi)|^pdt\right)r^{2\beta-pm+1}(1-r^2)^{\alpha} dr\\
             =\ds \frac{2}{\pi \mathscr B(\alpha+1,\beta+1)}\int_0^1\left(\int_{-r}^r|F(s\xi)|^pds\right)r^{2\beta-pm}(1-r^2)^{\alpha} dr\\
             =\ds \frac{2}{\pi \mathscr B(\alpha+1,\beta+1)}\int_{-1}^1|F(s\xi)|^p\left(\int_{|s|}^1r^{2\beta-pm}(1-r^2)^{\alpha} dr\right)ds\\
             =\ds \frac{2}{\pi \mathscr B(\alpha+1,\beta+1)}\int_{-1}^1|f(s\xi)|^p\left(|s|^{pm}\int_{|s|}^1r^{2\beta-pm}(1-r^2)^{\alpha} dr\right)ds\\
             \geq\ds \frac{1}{\pi}\int_{-1}^1|f(s\xi)|^p\mathscr J(|s|)ds.
          \end{array}
        $$
    \end{proof}
As an application of the Fej\'er-Riesz inequality on the Toeplitz operators, we have the following result:
\begin{theo}
For every $\xi\in\mathbb D^*$, if we consider the Toeplitz operator $\mathcal T$ defined by
$$\mathcal Tf(z)=\int_{-1}^1f(\xi x)\mathbb K_{\alpha,\beta}(z,\xi x)\mathscr J_{\alpha,\beta,2}(|x|)dx.$$
Then $\mathcal T$ is a positive bounded linear operator on $\mathcal A_{\alpha,\beta}(\mathbb D^*)$.
\end{theo}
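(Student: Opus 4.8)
The plan is to understand $\mathcal T$ through its sesquilinear form rather than through the pointwise integral, which lets the Fej\'er--Riesz inequality do essentially all of the work. For $f,g\in\mathcal A_{\alpha,\beta}^2(\mathbb D^*)$ I would introduce
$$B(f,g):=\int_{-1}^1 f(\xi x)\overline{g(\xi x)}\,\mathscr J_{\alpha,\beta,2}(|x|)\,dx .$$
Applying the Cauchy--Schwarz inequality to the positive measure $\mathscr J_{\alpha,\beta,2}(|x|)\,dx$ on $[-1,1]$ and then the Fej\'er--Riesz inequality (the previous theorem with $p=2$) to each factor gives
$$|B(f,g)|\le\Big(\int_{-1}^1|f(\xi x)|^2\,\mathscr J_{\alpha,\beta,2}(|x|)\,dx\Big)^{1/2}\Big(\int_{-1}^1|g(\xi x)|^2\,\mathscr J_{\alpha,\beta,2}(|x|)\,dx\Big)^{1/2}\le \pi\,\|f\|_{\alpha,\beta,2}\,\|g\|_{\alpha,\beta,2}.$$
Hence $B$ is a well-defined bounded Hermitian form, and by the Riesz representation of bounded sesquilinear forms there is a unique bounded operator $\mathcal T_0$ on $\mathcal A_{\alpha,\beta}^2(\mathbb D^*)$ with $\langle\mathcal T_0 f,g\rangle_{\alpha,\beta}=B(f,g)$ and $\|\mathcal T_0\|\le\pi$. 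Since $B(f,f)=\int_{-1}^1|f(\xi x)|^2\,\mathscr J_{\alpha,\beta,2}(|x|)\,dx\ge0$ and $\overline{B(f,g)}=B(g,f)$, this $\mathcal T_0$ is positive and self-adjoint.

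Next I would identify $\mathcal T_0$ with the integral operator $\mathcal T$ of the statement; this is where the reproducing kernel enters, and it pleasantly sidesteps any Fubini argument. Fix $z\in\mathbb D^*$ and test the form against the kernel function $\mathbb K_{\alpha,\beta}(\cdot,z)\in\mathcal A_{\alpha,\beta}^2(\mathbb D^*)$. Using the Hermitian symmetry $\overline{\mathbb K_{\alpha,\beta}(\xi x,z)}=\mathbb K_{\alpha,\beta}(z,\xi x)$,
$$B\big(f,\mathbb K_{\alpha,\beta}(\cdot,z)\big)=\int_{-1}^1 f(\xi x)\,\overline{\mathbb K_{\alpha,\beta}(\xi x,z)}\,\mathscr J_{\alpha,\beta,2}(|x|)\,dx=\int_{-1}^1 f(\xi x)\,\mathbb K_{\alpha,\beta}(z,\xi x)\,\mathscr J_{\alpha,\beta,2}(|x|)\,dx=\mathcal Tf(z).$$
On the other hand, the reproducing property applied to $\mathcal T_0 f$ yields $\langle\mathcal T_0 f,\mathbb K_{\alpha,\beta}(\cdot,z)\rangle_{\alpha,\beta}=\mathcal T_0 f(z)$. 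Comparing the two expressions gives $\mathcal T_0 f(z)=\mathcal Tf(z)$ for every $z\in\mathbb D^*$, so $\mathcal T=\mathcal T_0$ is exactly the bounded positive self-adjoint operator produced above, and moreover $\langle\mathcal Tf,f\rangle_{\alpha,\beta}=\int_{-1}^1|f(\xi x)|^2\,\mathscr J_{\alpha,\beta,2}(|x|)\,dx$.

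The only delicate point is the well-definedness of the form $B$ (equivalently, the convergence of the integral defining $\mathcal Tf(z)$): this rests entirely on the finiteness $\int_{-1}^1|f(\xi x)|^2\,\mathscr J_{\alpha,\beta,2}(|x|)\,dx\le\pi\|f\|_{\alpha,\beta,2}^2$ furnished by Fej\'er--Riesz, together with the fact that $\mathbb K_{\alpha,\beta}(\cdot,z)$ lies in $\mathcal A_{\alpha,\beta}^2(\mathbb D^*)$ for each $z\in\mathbb D^*$. I expect the main obstacle to be precisely organizing the argument so that no naive interchange of the integrals over $\mathbb D$ and over $[-1,1]$ is required: such an interchange is only borderline convergent near the endpoints $x=\pm1$ (the pointwise bounds of Proposition \ref{p1} and the Hardy--Littlewood theorem leave a logarithmic gap there), which is exactly why routing everything through the bounded form $B$ and the reproducing identity is the clean route.
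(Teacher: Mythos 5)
Your argument is essentially correct and its architecture is genuinely different from the paper's. The paper works directly on the operator: it computes $\langle\mathcal Tf,f\rangle_{\alpha,\beta}$ by interchanging the integrals over $\mathbb D$ and $[-1,1]$ (Fubini), collapses the inner integral via the reproducing property to get $\langle\mathcal Tf,f\rangle_{\alpha,\beta}=\int_{-1}^1|f(\xi x)|^2\mathscr J_{\alpha,\beta,2}(|x|)\,dx$, bounds this by Fej\'er--Riesz with $p=2$, and then infers self-adjointness and boundedness from positivity, using $\|\mathcal T\|=\sup\{|\langle\mathcal Tf,f\rangle_{\alpha,\beta}|;\ \|f\|_{\alpha,\beta,2}=1\}$. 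You instead bound the sesquilinear form $B$ first (Cauchy--Schwarz plus Fej\'er--Riesz), produce a bounded operator $\mathcal T_0$ from the Riesz representation of bounded forms, and only then identify $\mathcal T_0=\mathcal T$ by testing against the kernel functions $\mathbb K_{\alpha,\beta}(\cdot,z)$. Your route buys two things the paper leaves implicit: no interchange of integrals ever needs justification, and it is automatic both that $\mathcal Tf$ lies in $\mathcal A_{\alpha,\beta}^2(\mathbb D^*)$ and that the operator is bounded; the paper's closing step tacitly treats $\mathcal T$ as an everywhere-defined operator into the space and uses the norm formula valid for bounded self-adjoint operators. One small inaccuracy in your final remark: for $\xi\in\mathbb D^*$ the paper's Fubini step is \emph{not} borderline at $x=\pm1$, since $1-|\xi x|^2\geq1-|\xi|^2>0$ there; it is justified by the bound $\int_{\mathbb D}|\mathbb K_{\alpha,\beta}(z,\xi x)||f(z)|\,d\mu_{\alpha,\beta}(z)\leq\|f\|_{\alpha,\beta,2}\sqrt{\mathbb K_{\alpha,\beta}(\xi x,\xi x)}$ together with $\mathscr J_{\alpha,\beta,2}(|x|)\sim|x|^{2m}$ near $x=0$. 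This does not affect your proof; it only weakens your motivation for avoiding Fubini.

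There is, however, one genuine gap, which you inherit from the paper rather than introduce: the Fej\'er--Riesz theorem of this paper is stated and proved only for $|\xi|=1$, whereas here $\xi\in\mathbb D^*$, so the inequality $\int_{-1}^1|f(\xi x)|^2\mathscr J_{\alpha,\beta,2}(|x|)\,dx\leq\pi\|f\|_{\alpha,\beta,2}^2$ that you (and the paper) invoke is not covered by it. The repair is a rescaling: write $u=\xi/|\xi|$, substitute $s=|\xi|x$, and note that with $m=m_{2,\beta}$,
$$\mathscr J_{\alpha,\beta,2}(r)=\frac{2r^{2m}}{\mathscr B(\alpha+1,\beta+1)}\int_r^1t^{2\beta-2m+1}(1-t^2)^\alpha\,dt,$$
so that $\mathscr J_{\alpha,\beta,2}(|s|/|\xi|)\leq|\xi|^{-2m}\mathscr J_{\alpha,\beta,2}(|s|)$, because the prefactor scales by $|\xi|^{-2m}$ while the tail integral only shrinks. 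Hence
$$\int_{-1}^1|f(\xi x)|^2\mathscr J_{\alpha,\beta,2}(|x|)\,dx\leq\frac{1}{|\xi|^{2m+1}}\int_{-1}^1|f(su)|^2\mathscr J_{\alpha,\beta,2}(|s|)\,ds\leq\frac{\pi}{|\xi|^{2m+1}}\|f\|_{\alpha,\beta,2}^2,$$
where Fej\'er--Riesz is now applied legitimately at the boundary point $u$. With this $\xi$-dependent constant your whole argument goes through verbatim, and the theorem's conclusion (a positive bounded operator for each fixed $\xi\in\mathbb D^*$) stands. Note that some dependence on $\xi$ is unavoidable, so the uniform bound $\|\mathcal T\|\leq\pi$ asserted in the paper and reproduced in your proof is actually false in general: for $\beta=\tfrac12$ (hence $m=1$) and $f(z)=1/z\in\mathcal A_{\alpha,1/2}^2(\mathbb D^*)$ one computes $B(f,f)=c_\alpha/|\xi|^2$ with $c_\alpha>0$ independent of $\xi$, which exceeds $\pi\|f\|_{\alpha,1/2,2}^2$ once $|\xi|$ is small.
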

When $\beta=0$, this result is due to Andreev \cite{An} proved in a restricted case.
\begin{proof}
Thanks to Fubini theorem, for every $f\in \mathcal A_{\alpha,\beta}(\mathbb D^*)$ one has
$$\begin{array}{lcl}
    \langle \mathcal Tf,f\rangle_{\alpha,\beta}&=&\ds \int_{\mathbb D}\mathcal Tf(z)\overline{f(z)}d\mu_{\alpha,\beta}(z)\\
    &=&\ds \int_{\mathbb D}\left(\int_{-1}^1f(\xi x)\mathbb K_{\alpha,\beta}(z,\xi x)\mathscr J_{\alpha,\beta,2}(|x|)dx\right) \overline{f(z)}d\mu_{\alpha,\beta}(z)\\
    &=&\ds \int_{-1}^1f(\xi x)\overline{\int_{\mathbb D}\mathbb K_{\alpha,\beta}(\xi x,z)f(z)d\mu_{\alpha,\beta}(z)}\mathscr J_{\alpha,\beta,2}(|x|)dx\\
    &=&\ds \int_{-1}^1f(\xi x)\overline{f(\xi x)}\mathscr J_{\alpha,\beta,2}(|x|)dx\\
    &\leq & \pi\|f\|_{\alpha,\beta,2}^2.
  \end{array}
$$
The last inequality is the Fej\'er-Riesz one in the particular case $p=2$.\\
This proves that the operator $\mathcal T$ is positive and thus it is self-adjoint and bounded with norm $\|\mathcal T\|\leq\pi.$ Indeed,
$$\|\mathcal T\|=\sup\{|\langle \mathcal Tf,f\rangle_{\alpha,\beta}|;\ \|f\|_{\alpha,\beta,2}=1\}\leq \pi.$$
\end{proof}

\end{document}